\newtheorem{theorem}{Theorem}[section]
\newtheorem{lemma}[theorem]{Lemma}
\newtheorem{proposition}[theorem]{Proposition}
\newtheorem{corollary}[theorem]{Corollary}
\theoremstyle{definition}
\newtheorem{example}[theorem]{Example}
\theoremstyle{remark}
\newtheorem{remark}[theorem]{Remark}
\numberwithin{equation}{section}
\newcommand{\Des}{\ensuremath{\mathrm{Des}}}
\newcommand{\Sym}{\ensuremath{\mathrm{Sym}}}
\newcommand{\QSym}{\ensuremath{\mathrm{QSym}}}
\newcommand{\SYRT}{\ensuremath{\mathrm{SYRT}}}
\newcommand{\R}{\ensuremath{\mathcal{R}}}
\newcommand{\comp}{\ensuremath{\mathrm{comp}}}
\newcommand{\C}{\ensuremath{\mathbb{C}}}
\newcommand{\Set}{\ensuremath{\mathbb{S}}}
\newcommand{\excise}[1]{}
\newlength\cellsize \setlength\cellsize{12\unitlength}
\newcommand\cellify[1]{\def\thearg{#1}\def\nothing{}%
\ifx\thearg\nothing\vrule width0pt height\cellsize depth0pt%
  \else\hbox to 0pt{\usebox2\hss}\fi%
  \vbox to 18\unitlength{\vss\hbox to 18\unitlength{\hss$#1$\hss}\vss}}
\newcommand\tableau[1]{\vtop{\let\\=\cr
\setlength\baselineskip{-12000pt}
\setlength\lineskiplimit{12000pt}
\setlength\lineskip{0pt}
\halign{&\cellify{##}\cr#1\crcr}}}
\begin{document}


\title{$0$-Hecke modules for Young row-strict quasisymmetric Schur functions}  

\author[J. Bardwell]{Joshua Bardwell}
\address{Department of Mathematics and Statistics, University of Otago, 730 Cumberland St., Dunedin 9016, New Zealand}
\email{barjo848@student.otago.ac.nz}

\author[D. Searles]{Dominic Searles}
\address{Department of Mathematics and Statistics, University of Otago, 730 Cumberland St., Dunedin 9016, New Zealand}
\email{dominic.searles@otago.ac.nz}

\subjclass[2010]{Primary 05E05, 20C08, Secondary 05E10}

\date{December 22, 2020}


\keywords{$0$-Hecke algebra, Young row-strict quasisymmetric Schur functions, quasisymmetric characteristic}

\begin{abstract}
We construct modules of the $0$-Hecke algebra whose images under the quasisymmetric characteristic map are the Young row-strict quasisymmetric Schur functions. This provides a representation-theoretic interpretation of this basis of quasisymmetric functions, answering a question of Mason and Niese (2015). Additionally, we classify when these modules are indecomposable. 
\end{abstract}

\maketitle

%
\section{Introduction}
%
\label{sec:introduction}

The \emph{Schur functions} form a basis of the algebra of symmetric functions $\Sym$ that plays an important role in myriad areas of mathematics. Schur functions arise, for example, as representatives of Schubert classes in the cohomology of Grassmannian varieties, as characters of irreducible polynomial representations of general linear groups, and as images of irreducible characters of symmetric groups under the characteristic map. 
The algebra $\Sym$ is a subalgebra of the algebra $\QSym$ of quasisymmetric functions, and it is therefore natural to seek bases of $\QSym$ that reflect or extend properties of the Schur functions. 
Examples of such bases of $\QSym$ include the \emph{fundamental quasisymmetric functions} \cite{Gessel}, the \emph{dual immaculate functions} \cite{BBSSZ:2}, the \emph{quasisymmetric Schur functions} \cite{HLMvW11:QS}, the \emph{row-strict quasisymmetric Schur functions} \cite{Mason.Remmel}, and the \emph{extended Schur functions} \cite{Assaf.Searles:3}. 

Quasisymmetric functions have a representation-theoretic interpretation in terms of \emph{$0$-Hecke algebras}, which are a certain deformation of the group algebra of symmetric groups. There is an isomorphism of algebras between the Grothendieck group of $0$-Hecke representations and $\QSym$, known as the \emph{quasisymmetric characteristic} \cite{DKLT}. 
Analogously to the role Schur functions play for irreducible representations of symmetric groups, the fundamental quasisymmetric functions are the images of the irreducible representations of $0$-Hecke algebras under the quasisymmetric characteristic map \cite{DKLT}. 
This raises the question of interpreting other bases of $\QSym$ as quasisymmetric characteristics of certain families of $0$-Hecke modules. Indeed all the aforementioned bases, save the row-strict quasisymmetric Schur functions, have been interpreted in this way; such modules were constructed for dual immaculate functions in \cite{BBSSZ}, for quasisymmetric Schur functions in \cite{TvW:1}, and for extended Schur functions in \cite{Sea20}.  

The row-strict quasisymmetric Schur functions are conjugate to the well-studied quasisymmetric Schur functions under an extension of the famous $\omega$ involution from $\Sym$ to $\QSym$ \cite{Malvenuto.Reutenauer}. The Schur functions expand positively in the row-strict quasisymmetric Schur basis via an elegant formula \cite{Mason.Remmel}, as they do into the quasisymmetric Schur basis \cite{HLMvW11:QS}. In \cite{Mason.Niese}, a closely-related variant called the \emph{Young row-strict quasisymmetric Schur functions} was introduced and many properties of this basis discovered, including an analogue of the Littlewood-Richardson rule. The question of interpreting the Young row-strict quasisymmetric Schur functions in terms of $0$-Hecke modules was raised in \cite{Mason.Niese}. 

Additionally, there has been recent interest in further understanding the structure of $0$-Hecke modules that arise in this context, particularly regarding indecomposability. All $0$-Hecke modules for dual immaculate quasisymmetric functions and extended Schur functions are indecomposable (\cite{BBSSZ}, \cite{Sea20} respectively). The same is not true for the modules for quasisymmetric Schur functions, however \cite{TvW:1} provided a direct-sum formula for these modules and used this to classify which modules are indecomposable. Later, \cite{Koenig} showed that all components of this direct sum decomposition are indecomposable. 
In \cite{TvW:2}, $0$-Hecke modules for a generalization of quasisymmetric Schur functions were constructed, and \cite{Choi.Kim.Nam.Oh20a} established several structural results concerning these modules, including classifying indecomposability. Moreover, \cite{Choi.Kim.Nam.Oh20b} determined the projective covers for the modules for dual immaculate quasisymmetric functions and extended Schur functions, and for those modules in \cite{TvW:2} that are indecomposable.

In this paper, we answer the question of Mason and Niese \cite{Mason.Niese} by constructing $0$-Hecke modules whose quasisymmetric characteristics are the Young row-strict quasisymmetric Schur functions. Moreover, we classify when these modules are indecomposable. Proving this classification turns out to be more involved than the analogous arguments for indecomposability of modules for dual immaculate, extended Schur and quasisymmetric Schur functions. The condition that classifies indecomposability for modules for Young row-strict quasisymmetric Schur functions in fact agrees with that for modules for quasisymmetric Schur functions \cite{TvW:1}, at least up to a reversal of the compositions indexing the functions; this is due to similarity in the definitions of the standard tableaux defining each of these bases. However, significant difference in the \emph{descent} structure between these families of tableaux leads to different module structure, necessitating a different approach to the proof. 

The paper is organized as follows. In Section 2 we review the necessary background concerning the fundamental and row-strict Young quasisymmetric Schur bases of $\QSym$ and the $0$-Hecke algebra. In Section 3 we define a $0$-Hecke action on the \emph{standard Young row-strict tableaux} of \cite{Mason.Niese} and prove that the quasisymmetric characteristics of the corresponding $0$-Hecke modules are precisely the Young row-strict quasisymmetric Schur functions. We give a formula for a decomposition of these modules into a direct sum of nonzero submodules, and prove that each of these submodules is generated by a single tableau. We also establish precisely when this direct sum formula has only a single summand. In Section 4 we prove that a certain submodule is always indecomposable. The indecomposability classification then follows from the fact that when the direct sum formula has only one summand, this submodule is the entire module.

\section{Background}

\subsection{Quasisymmetric functions}

A \emph{composition} is a finite sequence $\alpha=(\alpha_1, \ldots , \alpha_k)$ of positive integers. The \emph{parts} of $\alpha$ are the integers $\alpha_i$ for $1\le i \le k$, and the number $k$ of parts is the \emph{length} of $\alpha$, denoted $\ell(\alpha)$.  When the parts of $\alpha$ sum to $n$, we write $|\alpha|=n$ and say that $\alpha$ is a \emph{composition of $n$}, denoted $\alpha\vDash n$. We also write ${\rm max}(\alpha)$ to denote ${\rm max}\{\alpha_1, \ldots , \alpha_{\ell(\alpha)}\}$. 

Given a composition $\alpha=(\alpha_1, \ldots , \alpha_k)$ of $n$, define a subset $\mathbb{S}(\alpha)$ of $\{1, \ldots , n-1\}$ by $\mathbb{S}(\alpha) = \{\alpha_1, \alpha_1+\alpha_2, \ldots , \alpha_1+\alpha_2 +\cdots + \alpha_{k-1}\}$. The map $\alpha\mapsto \mathbb{S}(\alpha)$ is a bijection between compositions of $n$ and subsets of $\{1,\ldots , n-1\}$. Its inverse $\comp_n$ is defined by $\comp_n(\{x_1 < \cdots < x_r\}) = (x_1,x_2-x_1, \ldots , x_r - x_{r-1}, n-x_r)$. For example, $\mathbb{S}(3,2,2) = \{3,5\}\subseteq \{1, \ldots , 6\}$ and $\comp_7(\{1,3,4\}) = (1,2,1,3)\vDash 7$.

Denote by $\C[[x_1,x_2, \ldots]]$ the algebra of formal power series of bounded degree in infinitely many commuting variables. The algebra $\QSym$ of \emph{quasisymmetric functions} is a subalgebra of $\C[[x_1,x_2, \ldots]]$, and bases of $\QSym$ are indexed by compositions. The \emph{monomial quasisymmetric functions} $\{M_\alpha\}$ \cite{Gessel}, defined by
\[M_\alpha = \sum_{i_1< i_2 < \cdots < i_k} x_{i_1}^{\alpha_1}  \cdots x_{i_{k}}^{\alpha_{k}}\]
form a basis of $\QSym$. Another important basis is the \emph{fundamental quasisymmetric functions} $\{F_\alpha\}$ \cite{Gessel}, defined by
\[F_\alpha = \sum_{\beta \text{ refines }\alpha} M_\beta,\]
where $\beta$ \emph{refines} $\alpha$ if $\alpha$ can be obtained by summing consecutive parts of $\beta$.

\begin{example}
Let $\alpha = (1,2,2)$. We have
\[M_{(1,2,2)} = \sum_{i<j<k}x_ix_j^2x_k^2\]
and
\[F_{(1,2,2)} = M_{(1,2,2)} + M_{(1,1,1,2)} +  M_{(1,2,1,1)} + M_{(1,1,1,1,1)}.\]
\end{example}

We now introduce a third basis: the Young row-strict quasisymmetric Schur functions, which are defined in terms of certain tableaux of composition shape. 
The \emph{diagram} $D(\alpha)$ of a composition $\alpha$ is the array of cells having $\alpha_i$ left-justified cells in row $i$. We use French notation for composition diagrams, i.e., the rows are numbered from bottom to top. Let $(c,r)$ denote the cell in row $r$ and column $c$. We say the cell $(c+1,r)\in D(\alpha)$ is \emph{right-adjacent} to the cell $(c,r)$, and that $(c,r)$ is \emph{left-adjacent} to $(c+1,r)$. 

\begin{example}
Let $\alpha=(3,2,2)$. Then $D(\alpha) = \tableau{ {\ } & {\ }  \\ {\ }  & {\ } \\ {\ } & {\ } & {\ } }$.
\end{example}

Let $\alpha\vDash n$. A \emph{standard Young row-strict composition tableau} \cite{Mason.Niese} of shape $\alpha$ is a bijective assignment $T$ of the cells of $D(\alpha)$ to entries $1, \ldots , n$ satisfying the following conditions:

\begin{enumerate}
\item[(R1)] Entries increase from left to right along rows
\item[(R2)] Entries increase from bottom to top in the first column
\item[(R3)] If cells $(c,r)$ and $(c+1,r')$ for $r'<r$ are in $D(\alpha)$ and $T(c,r)<T(c+1,r')$, then $T(c+1,r)<T(c+1,r')$, where $T(c+1,r)$ is defined to be $\infty$ if $(c+1,r)\notin D(\alpha)$.
\end{enumerate}

Pictorially, (R3) states that for any three cells arranged as below (which we refer to as a \emph{triple}), if $a<c$ then $b<c$.

  \begin{displaymath}
    \begin{array}{l}
        \tableau{ a & b } \\[-0.5\cellsize]  \hspace{2\cellsize} \vdots  \hspace{0.4\cellsize} \\ \tableau{  & c } 
    \end{array}
  \end{displaymath}
  
We denote the set of all standard Young row-strict composition tableaux of shape $\alpha$ by $\SYRT(\alpha)$. For $T\in \SYRT(\alpha)$, when a cell with entry $j$ is right-adjacent to a cell with entry $i$ in $T$, we say $j$ is right-adjacent to $i$ (and $i$ is left-adjacent to $j$).

\begin{remark}
The term \emph{row-strict} comes from a semistandard version of these tableaux, in which an entry $i$ may appear more than once. For semistandard Young row-strict tableaux, entries may be repeated in columns but are required to strictly increase along rows; see \cite{Mason.Niese}. We will not need the semistandard version in this paper.
\end{remark}

Define the \emph{descent set} $\Des(T)$ of $T\in \SYRT(\alpha)$ to be the entries $i$ such that $i+1$ is strictly to the right of $i$ in $T$.

\begin{example}\label{ex:SYRT}
Let $\alpha = (3,2,2)\vDash 7$. The tableaux in $\SYRT(\alpha)$, along with their descent sets, are shown below.
  \begin{displaymath}
   \begin{array}{c@{\hskip1.5\cellsize}c@{\hskip1.5\cellsize}c@{\hskip1.5\cellsize}c@{\hskip1.5\cellsize}c}
\tableau{ 6 & 7 \\ 4 & 5 \\ 1 & 2 & 3 } & \tableau{ 6 & 7 \\ 3 & 5 \\ 1 & 2 & 4 }  & \tableau{ 5 & 6 \\ 4 & 7 \\ 1 & 2 & 3 }  &  \tableau{ 4 & 6 \\ 3 & 7 \\ 1 & 2 & 5 }  & \tableau{ 5 & 6 \\ 3 & 7 \\ 1 & 2 & 4 } \\ \\
\{1,2,4,6\} & \{1,3,6\} & \{1,2,5\} & \{1,4\} & \{1,3,5\} 
\end{array}
  \end{displaymath}
\end{example}

For $\alpha\vDash n$, the Young row-strict quasisymmetric Schur function $\R_\alpha$ \cite{Mason.Niese} is defined by
\[\R_\alpha = \sum_{T\in \SYRT(\alpha)} F_{\comp_n(\Des(T))}.\]

\begin{example}
By Example~\ref{ex:SYRT}, we have 
\[\R_{(3,2,2)} = F_{(1,1,2,2,1)} + F_{(1,2,3,1)}+F_{(1,1,3,2)}+F_{(1,3,3)} + F_{(1,2,2,2)}.\]
\end{example}

\subsection{$0$-Hecke algebras}

The $0$-Hecke algebra $H_n(0)$ is the $\C$-algebra with generators $T_1, \ldots , T_{n-1}$ subject to the relations
\begin{align*}
T_i^2 & =  T_i  \quad \mbox{ for all } 1 \le i \le n-1 \\
T_iT_j & =  T_jT_i  \quad \mbox{ for all } i, j  \mbox{ such that } |i-j|\ge 2 \\
T_iT_{i+1}T_i & = T_{i+1}T_iT_{i+1}  \quad  \mbox{ for all } 1\le i \le n-2.
\end{align*}

Given a representation $X$ of $H_n(0)$, let $[X]$ denote its isomorphism class. The \emph{Grothendieck group} $\mathcal{G}_0(H_n(0))$ is the linear span of the isomorphism classes of the finite-dimensional representations of $H_n(0)$, modulo the relation $[Y]=[X]+[Z]$ for each short exact sequence $0\rightarrow X\rightarrow Y\rightarrow Z\rightarrow 0$ of $H_n(0)$-representations $X,Y,Z$. Define
\[\mathcal{G} = \bigoplus_{n\ge 0} \mathcal{G}_0(H_n(0)).\]
There are $2^{n-1}$ irreducible representations of $H_n(0)$, all of which are one-dimensional \cite{Norton}. They may be indexed by the $2^{n-1}$ compositions of $n$; let $\mathbf{F}_\alpha$ denote the irreducible representation corresponding to the composition $\alpha$. Let $\{v_\alpha\}$ be a basis of $\mathbf{F}_\alpha$. The following action of the $T_i$ on $v_\alpha$ gives the structure of $\mathbf{F}_\alpha$ as a $H_n(0)$-representation.
\begin{align}\label{eq:irreps}
T_i(v_\alpha) = \begin{cases} v_\alpha & \mbox{ if } i\notin \Set(\alpha) \\
					       0 & \mbox{ if } i \in \Set(\alpha).
					       \end{cases}
 \end{align}

The set $\{[\mathbf{F}_\alpha]\}$ as $\alpha$ ranges over all compositions forms a basis of $\mathcal{G}$. 
There is an algebra isomorphism $ch:\mathcal{G}\rightarrow \QSym$ \cite{DKLT} defined by setting $ch([\mathbf{F}_\alpha]) = F_\alpha$. For any $H_n(0)$-module $X$, the \emph{quasisymmetric characteristic} of $X$ is the quasisymmetric function $ch([X])$.

\section{Modules for Young row-strict quasisymmetric Schur functions}\label{sec:modules}

In this section we construct $H_n(0)$-modules $\bf{R}_\alpha$ whose quasisymmetric characteristics are the Young row-strict quasisymmetric Schur functions $\mathcal{R}_\alpha$, answering a question of Mason and Niese \cite{Mason.Niese}. We then show that each ${\bf R}_\alpha$ decomposes as a direct sum of nonzero submodules, each of which is generated by a single $\SYRT$, and characterize the compositions for which this direct sum has only one summand. These structural results will be needed for the indecomposability classification in Section 4.

\subsection{$0$-Hecke modules on standard Young row-strict tableaux}

Let $\alpha$ be a composition of $n$. For each $1\le i \le n-1$ and each $T\in \SYRT(\alpha)$, define 
\[\pi_i(T) = \begin{cases} T & \mbox{ if } $i+1$ \mbox{ is weakly left of } $i$ \mbox{ in } $T$ \\
                                        0 & \mbox{ if } $i+1$ \mbox{ is right-adjacent to } $i$ \mbox{ in } $T$ \\
                                        s_i(T) & \mbox{ otherwise}
                                        \end{cases}
                                        \]
                                        where $s_i(T)$ is the filling of $D(\alpha)$ obtained by swapping the entries $i$ and $i+1$ in $T$.

\begin{example}
Let $\alpha = (3,2,2)$, and let 
\[T =  \tableau{ 5 & 6 \\ 3 & 7 \\ 1 & 2 & 4 }   \in \SYRT(\alpha).\]
Then $\pi_2(T)=\pi_4(T)=\pi_6(T)=T$, $\pi_1(T) = \pi_5(T) = 0$ and 
\[\pi_3(T) = s_3(T) =  \tableau{ 5 & 6 \\ 4 & 7 \\ 1 & 2 & 3 }   \in \SYRT(\alpha).\]
\end{example}

Let ${\bf R}_\alpha$ denote the $\C$-vector space spanned by $\SYRT(\alpha)$. 

\begin{lemma}\label{lem:imagecontained}
Let $T\in \SYRT(\alpha)$. Then for any $1\le i \le n-1$ we have $\pi_i(T) \in {\bf R}_\alpha$.
\end{lemma}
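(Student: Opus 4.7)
The plan is to split on the three cases in the definition of $\pi_i(T)$. In two of them, $\pi_i(T)\in\{0,T\}$ is manifestly in ${\bf R}_\alpha$, so the substance of the lemma lies in the third case $\pi_i(T)=s_i(T)$, under the hypothesis that $i+1$ sits strictly to the right of $i$ in $T$ but is not right-adjacent to it. Since $s_i$ preserves both the shape $\alpha$ and bijectivity with $\{1,\ldots,n\}$, the goal reduces to verifying that $s_i(T)$ satisfies the defining conditions (R1), (R2), (R3) for $\SYRT(\alpha)$.

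For (R1), the only cells where row-strictness could fail after the swap are the horizontal neighbors of the cells of $i$ and $i+1$. In each instance the neighbor's entry lies outside $\{i,i+1\}$, and the strict inequality against $i$ or $i+1$ available in $T$ strengthens to the required inequality in $s_i(T)$ precisely because the hypothesis forbids $i+1$ from being right-adjacent to $i$ (and likewise forbids $i$ from being left-adjacent to $i+1$). For (R2), the only case requiring attention is when $i$ lies in the first column; here one uses that $i+1$ has column $\ge 2$ and therefore is not in column one, so that the entry directly above $i$ in column one (if any) must exceed $i+1$, not merely $i$.

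The main work is (R3). I would analyse each triple $(a,b,d)$ of $s_i(T)$ according to which of $a,b,d$ lie in $\{i,i+1\}$. If none does, the triple is unchanged between $T$ and $s_i(T)$. If exactly one of them does, the transfer from (R3) in $T$ to (R3) in $s_i(T)$ follows from the elementary fact that for $x \notin \{i,i+1\}$, the inequality $x > i$ is equivalent to $x > i+1$. When two of $a,b,d$ lie in $\{i,i+1\}$, nearly every sub-configuration is precluded by the hypothesis: those with $\{a,b\}=\{i,i+1\}$ force $i$ and $i+1$ to be right-adjacent in one order or the other in $T$; those with $\{b,d\}=\{i,i+1\}$ place $i$ and $i+1$ in the same column of $T$; and $(a,d)=(i,i+1)$ would place $i+1$ strictly to the left of $i$ in $T$. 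The sole surviving configuration $(a,d)=(i+1,i)$ satisfies (R3) for $s_i(T)$ vacuously, since $a>d$.

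The main obstacle is not mathematical depth but organising this case-split cleanly; the useful upfront observation is that the exclusions built into the definition of $\pi_i(T)=s_i(T)$ are exactly what is needed to rule out the triple sub-configurations that would otherwise present genuine obstructions, leaving only cases that either transfer directly from $T$ or are vacuous.
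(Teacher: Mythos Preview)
Your proof is correct and follows essentially the same approach as the paper: reduce to the case $\pi_i(T)=s_i(T)$ and verify (R1)--(R3) directly. The paper's version is slightly more streamlined---it first observes that $i$ and $i+1$ lie in different rows and different columns, which dispatches (R1) and (R2) immediately, and for (R3) it shows that no triple of $T$ can contain both $i$ and $i+1$ (your surviving configuration $(a,d)=(i+1,i)$ is in fact ruled out in $T$ by (R3) itself), whereas you handle that configuration by vacuity; both arguments are valid.
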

\begin{proof}
This is immediate in the case where $\pi_i(T)=T$ or $\pi_i(T)=0$. Suppose that $\pi_i(T)=s_i(T)$; we need to show $s_i(T)\in \SYRT(\alpha)$. Observe that $i$ and $i+1$ must be in different rows in $T$, since if they were in the same row then $i+1$ would be right-adjacent to $i$ by (R1). Therefore, swapping $i$ and $i+1$ does not change the relative order of entries in either the row containing $i$ or the row containing $i+1$, so $s_i(T)$ satifies (R1). The entries $i$ and $i+1$ are necessarily in different columns of $T$, therefore swapping $i$ and $i+1$ does not change the relative order of entries in any column, so $s_i(T)$ satisfies (R2). For (R3), observe that for any triple in $T$ that involves at most one of $i$ and $i+1$, the relative order of entries in that triple is the same in $s_i(T)$ as it is in $T$. Therefore, we only need to consider the case that $i$ and $i+1$ belong to the same triple in $T$. But such a triple cannot exist in $T$: since $i+1$ is strictly right of $i$ in a different row, this would mean $i$ occupies the top-left cell of the triple and $i+1$ the bottom-right cell. Then the entry right-adjacent to $i$ would be larger than $i+1$, meaning (R3) is not satisfied for $T$.
\end{proof}

\begin{theorem}\label{thm:0Hecke}
The operators $\pi_i$ define an $H_n(0)$-action on ${\bf R}_\alpha$. 
\end{theorem}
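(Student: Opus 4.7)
The plan is to verify, on each basis tableau $T \in \SYRT(\alpha)$, the three defining relations of $H_n(0)$: the idempotent relation $\pi_i^2 = \pi_i$, the far commutation $\pi_i\pi_j = \pi_j\pi_i$ for $|i-j| \geq 2$, and the braid relation $\pi_i \pi_{i+1}\pi_i = \pi_{i+1}\pi_i\pi_{i+1}$. Lemma~\ref{lem:imagecontained} keeps us inside ${\bf R}_\alpha$ throughout, so it suffices to track which of the three defining branches of $\pi_i$ each intermediate tableau lies in.

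For the idempotent relation I would consider the three branches of $\pi_i(T)$ separately. The branches $\pi_i(T) = T$ and $\pi_i(T) = 0$ are trivial. If $\pi_i(T) = s_i(T)$, then in $T$ the entry $i+1$ lies strictly to the right of $i$ and not adjacent; after swapping, $i+1$ sits in the cell previously holding $i$, so in $s_i(T)$ the entry $i+1$ is strictly left of $i$. Hence $\pi_i(s_i(T)) = s_i(T)$ and $\pi_i^2(T) = \pi_i(T)$.

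For the far commutation relation, the key observation is that each $\pi_k$ depends only on the relative positions of $k$ and $k+1$, and each either fixes $T$, sends it to $0$, or swaps $k$ with $k+1$. When $|i-j| \geq 2$ the sets $\{i,i+1\}$ and $\{j,j+1\}$ are disjoint and the corresponding entries occupy distinct cells; any swap produced by one operator does not move a cell containing an entry from the other pair, so the branch (fix/zero/swap) of one operator is unaffected by applying the other first. A brief case check of the nine possibilities (three branches of $\pi_i$ against three of $\pi_j$) establishes equality.

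The main obstacle is the braid relation, where I would do a case analysis according to the relative horizontal positions of the three consecutive entries $i, i+1, i+2$ in $T$. For each arrangement (such as all three weakly left-stepping as their values increase, or two strict right-steps, or a right-step followed by a left-step, and so on) I would compute the iterated image under both words $\pi_i\pi_{i+1}\pi_i$ and $\pi_{i+1}\pi_i\pi_{i+1}$, checking which branch of the definition applies at each stage. The delicate part is confirming that conditions (R1)--(R3) correctly govern the branch transitions: after a swap, the triple condition (R3) involving the remaining member of $\{i,i+1,i+2\}$ may change in subtle ways, and some a priori possible configurations must be excluded by (R3) applied in $T$. I would exploit the observation, used already in the proof of Lemma~\ref{lem:imagecontained}, that whenever $\pi_k$ swaps $k$ and $k+1$ these two entries cannot jointly sit in a triple, together with careful bookkeeping on which of the entries $i, i+1, i+2$ become left-adjacent, right-adjacent, or non-adjacent at each step. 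Once every arrangement yields matching output on both sides (including the ``degenerate'' cases where one side passes through $0$), the braid relation is established, completing the verification that $\pi_i$ generate an $H_n(0)$-action.
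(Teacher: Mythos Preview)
Your proposal is correct and follows essentially the same approach as the paper: verify the three $0$-Hecke relations on each basis tableau, handling the idempotent and far-commutation relations exactly as you describe, and establishing the braid relation by a case analysis on the relative horizontal positions of $i$, $i+1$, $i+2$ (the paper organizes this as the position of $i+1$ relative to $i$, then subcases on the position of $i+2$ relative to $i+1$). The only remark is that in the actual execution the paper never needs to invoke (R3) explicitly during the braid case-check---tracking which cells the entries occupy after each swap suffices---so the ``subtle'' interaction you anticipate with the triple condition does not in fact arise.
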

\begin{proof}
Let $T\in \SYRT(\alpha)$. Lemma~\ref{lem:imagecontained} establishes that for any $i$, $\pi_i(T)\in {\bf R}_\alpha$. We need to confirm that the operators $\pi_i$ satisfy the relations for the generators of the $0$-Hecke algebra.

If $i+1$ is weakly left of $i$ in $T$, then $\pi_i(T)=T$, so $\pi_i^2(T) = T = \pi_i(T)$. If $i+1$ is right-adjacent to $i$ in $T$, then $\pi_i(T)=0$, so $\pi_i^2(T) = 0 = \pi_i(T)$. Otherwise $\pi_i(T)=s_i(T)$, in which case $\pi_i^2(T) = \pi_i(s_i(T)) = s_i(T) = \pi_i(T)$, where the middle equality follows from the fact that $i+1$ is left of $i$ in $s_i(T)$. Hence $\pi_i^2=\pi_i$ for all $1\le i \le n-1$.

If $|i-j|\ge 2$, then there is no overlap between $\{i, i+1\}$ and $\{j,j+1\}$. In other words, $\pi_i$ and $\pi_j$ apply to disjoint pairs of entries in $T$ and therefore it is immediate that $\pi_i\pi_j = \pi_j\pi_i$.

We now show that $\pi_i\pi_{i+1}\pi_i(T) = \pi_{i+1}\pi_i\pi_{i+1}(T)$  via the following cases.

\begin{enumerate}
\item $i+1$ is right-adjacent to $i$
\item $i+1$ is weakly left of $i$
 \begin{enumerate}
 \item $i+2$ is right-adjacent to $i+1$
 \item $i+2$ is weakly left of $i+1$
 \item $i+2$ is strictly right of $i+1$ and not right-adjacent to $i+1$
 \end{enumerate}
\item $i+1$ is strictly right of $i$ and not right-adjacent to $i$
 \begin{enumerate}
 \item $i+2$ is right-adjacent to $i+1$
 \item $i+2$ is weakly left of $i+1$
 \item $i+2$ is strictly right of $i+1$ and not right-adjacent to $i+1$
 \end{enumerate}
\end{enumerate}

\noindent
(1): Here $\pi_{i}(T)=0$, so $\pi_i\pi_{i+1}\pi_{i}(T) = 0$. If $\pi_{i+1}(T)=0$ we are done. If $\pi_{i+1}(T) = T$, then $\pi_{i+1}\pi_{i}\pi_{i+1}(T) =  \pi_{i+1}\pi_{i}(T) = 0$. If $\pi_{i+1}(T) = s_{i+1}(T)$, then $i+2$ is strictly right of $i+1$ (and also $i$) and in a different row to $i+1$ (and $i$). Hence $\pi_i\pi_{i+1}(T) = s_is_{i+1}(T)$, and the cells occupied by $i+1$ and $i+2$ in $s_is_{i+1}(T)$ are exactly the cells occupied by $i$ and $i+1$ respectively in $T$. Since these cells are adjacent, we have  $\pi_{i+1}\pi_{i}\pi_{i+1}(T) = \pi_{i+1}s_is_{i+1}(T) = 0$.

\noindent
(2)(a): Here $\pi_i(T) = T$ and $\pi_{i+1}(T)=0$, so $\pi_i\pi_{i+1}\pi_i(T)  = \pi_{i+1}\pi_i\pi_{i+1}(T)=0$.

\noindent
(2)(b): Here $\pi_i(T)=T$ and $\pi_{i+1}(T)=T$, so $\pi_i\pi_{i+1}\pi_i(T)=\pi_{i+1}\pi_i\pi_{i+1}(T)=T$.

\noindent
(2)(c): Here $\pi_i(T)=T$ and $\pi_{i+1}(T)=s_{i+1}(T)$. If $i+2$ is right-adjacent to $i$ in $T$, then $i+1$ is right-adjacent to $i$ in $s_{i+1}(T)$, so $\pi_i\pi_{i+1}\pi_{i}(T) = \pi_i (s_{i+1}(T)) = 0$ and $\pi_{i+1}\pi_{i}\pi_{i+1}(T) = \pi_{i+1}\pi_i(s_{i+1}(T))=0$. If $i+2$ is weakly left of $i$ in $T$, then $i+1$ is weakly left of $i$ in $s_{i+1}(T)$, so $\pi_i(s_{i+1}(T)) = s_{i+1}(T)$. Then $\pi_i\pi_{i+1}\pi_{i}(T) = \pi_i (s_{i+1}(T)) = s_{i+1}(T)$ and $\pi_{i+1}\pi_{i}\pi_{i+1}(T) =  \pi_{i+1}\pi_i(s_{i+1}(T)) = \pi_{i+1} (s_{i+1}(T)) = s_{i+1}(T)$. Finally, if $i+2$ is strictly right of $i$ in $T$ and not right-adjacent to $i$, then $i+1$ is strictly right of $i$ in $s_{i+1}(T)$ (and not right-adjacent to $i$), so $\pi_i(s_{i+1}(T)) = s_is_{i+1}(T)$. Then $\pi_i\pi_{i+1}\pi_{i}(T) = \pi_i (s_{i+1}(T)) = s_is_{i+1}(T)$ and $\pi_{i+1}\pi_{i}\pi_{i+1}(T) =  \pi_{i+1}\pi_i(s_{i+1}(T)) = \pi_{i+1} (s_is_{i+1}(T)) = s_is_{i+1}(T)$, where the last equality is due to the fact that the cells occupied by $i+1$ and $i+2$ in $s_is_{i+1}(T)$ are those occupied by $i$ and $i+1$ in $T$.

\noindent
(3)(a): Here $\pi_{i+1}(T)=0$, so $\pi_{i+1}\pi_i\pi_{i+1}(T)=0$. We have $\pi_i(T) = s_i(T)$, and in $s_i(T)$,  $i+1$ is strictly left of $i+2$ and not left-adjacent to $i+2$, so $\pi_i\pi_{i+1}\pi_i(T) = \pi_i\pi_{i+1}s_i(T) = \pi_i(s_{i+1}s_i(T))$. In $s_{i+1}s_i(T)$, the cells occupied by $i$ and $i+1$ are exactly those occupied by $i+1$ and $i+2$ respectively in $T$, which are adjacent. Hence $\pi_i(s_{i+1}s_i(T))=0$.

\noindent
(3)(b): Here $\pi_i(T) = s_i(T)$ and $\pi_{i+1}(T)=T$. If $i+2$ is right-adjacent to $i$ in $T$, then $i+2$ is right-adjacent to $i+1$ in $s_i(T)$, so $\pi_i\pi_{i+1}\pi_{i}(T) = \pi_i \pi_{i+1} (s_{i}(T)) = 0$ and $\pi_{i+1}\pi_{i}\pi_{i+1}(T) = \pi_{i+1}\pi_i(T) = \pi_{i+1}(s_i(T)=0$. If $i+2$ is weakly left of $i$ in $T$, then $i+1$ is weakly left of $i+2$ in $s_i(T)$, so $\pi_{i+1}(s_i(T)) = s_i(T)$. Then $\pi_i\pi_{i+1}\pi_{i}(T)= \pi_i \pi_{i+1} (s_{i}(T)) = \pi_i(s_i(T)) = s_i(T)$, and $\pi_{i+1}\pi_{i}\pi_{i+1}(T) = \pi_{i+1}\pi_i(T) = \pi_{i+1}(s_i(T))=s_i(T)$. Finally, if $i+2$ is strictly right of $i$ in $T$ and not right-adjacent to $i$, then $i+2$ is strictly right of $i+1$ in $s_i(T)$ (and not right-adjacent to $i+1$), so $\pi_{i+1}(s_i(T)) = s_{i+1}s_i(T)$. Then $\pi_i\pi_{i+1}\pi_{i}(T)= \pi_i \pi_{i+1} (s_{i}(T)) = \pi_i(s_{i+1}s_i(T)) = s_{i+1}s_i(T)$, where the last equality is due to the fact that the cells occupied by $i$ and $i+1$ in $ s_{i+1}s_i(T)$ are those occupied by $i+1$ and $i+2$ respectively in $T$. On the other hand, we also have $\pi_{i+1}\pi_{i}\pi_{i+1}(T) = \pi_{i+1}\pi_i(T) = \pi_{i+1}(s_i(T))=s_{i+1}s_i(T)$. 

\noindent
(3)(c): Here we also have that $i+2$ is necessarily strictly right of $i$ and not right-adjacent to $i$ in $T$. As a result, we have  $\pi_i\pi_{i+1}\pi_i(T)=s_is_{i+1}s_i(T)$ and $\pi_{i+1}\pi_i\pi_{i+1}(T)=s_{i+1}s_is_{i+1}(T)$. Since $s_is_{i+1}s_i = s_{i+1}s_is_{i+1}$, these are the same tableaux.
\end{proof}

We now show that the $H_n(0)$-module ${\bf R}_\alpha$ has quasisymmetric characteristic $\R_\alpha$. Define a relation $\preceq$ on $\SYRT(\alpha)$ by declaring $T\preceq S$ if $S$ can be obtained from $T$ via applying a (possibly empty) sequence of the $\pi_i$ operators.

\begin{lemma}\label{lem:partialorder}
The relation $\preceq$ defines a partial order on $\SYRT(\alpha)$.
\end{lemma}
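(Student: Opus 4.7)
The plan is to verify reflexivity, transitivity, and antisymmetry, with the main work being antisymmetry. Reflexivity is immediate from the empty sequence of operators, and transitivity from concatenating two sequences (using Lemma~\ref{lem:imagecontained} to ensure intermediate results live in $\SYRT(\alpha)$, and noting that a valid chain $T\preceq S$ cannot pass through $0$ since $S\in\SYRT(\alpha)$).

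For antisymmetry, I would introduce a statistic on $\SYRT(\alpha)$ that is weakly decreasing under each $\pi_i$ and strictly decreasing whenever $\pi_i$ actually moves the tableau. A natural choice is
\[
\sigma(T) \;=\; \sum_{v=1}^{n} v \cdot c_T(v),
\]
where $c_T(v)$ denotes the column of the cell containing entry $v$ in $T$. Since the shape $\alpha$ is fixed, the multiset of column indices is the same for every $T\in\SYRT(\alpha)$, so $\sigma$ is a well-defined nonnegative integer statistic.

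The key computation is to track $\sigma$ under the three cases of the $\pi_i$ action. If $\pi_i(T)=T$ or $\pi_i(T)=0$, there is nothing to check. In the swap case $\pi_i(T)=s_i(T)\neq T$, by definition $i+1$ sits strictly to the right of $i$ in $T$, so if $i$ occupies column $c_1$ and $i+1$ occupies column $c_2$ then $c_1<c_2$. A direct computation gives
\[
\sigma(s_i(T)) - \sigma(T) \;=\; \bigl(i\cdot c_2 + (i+1)\cdot c_1\bigr) - \bigl(i\cdot c_1 + (i+1)\cdot c_2\bigr) \;=\; -(c_2-c_1) \;<\; 0,
\]
so $\sigma$ strictly decreases under any genuine swap.

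It follows that whenever $T\preceq S$, one has $\sigma(S)\le \sigma(T)$, with equality only if every operator in the witnessing sequence fixed its input, forcing $S=T$. Thus, if both $T\preceq S$ and $S\preceq T$ hold, then $\sigma(S)\le \sigma(T)\le \sigma(S)$, hence equality throughout, and therefore $T=S$. The only subtlety I anticipate is just being careful to point out that $\sigma$ is well-defined (the multiset of columns is determined by $\alpha$) and that chains witnessing $\preceq$ never pass through $0$; the rest is a short case check.
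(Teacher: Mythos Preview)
Your proof is correct and follows essentially the same approach as the paper: both establish antisymmetry via a statistic that moves strictly monotonically under any genuine swap $\pi_i(T)=s_i(T)$. The paper uses the vector $d(T)=(d(T)_1,\ldots,d(T)_{\max(\alpha)})$ with $d(T)_j$ the sum of entries in the first $j$ columns (each coordinate weakly increases, at least one strictly), whereas your scalar $\sigma(T)=\sum_v v\cdot c_T(v)$ is essentially the negative of $\sum_j d(T)_j$ up to an additive constant; your single-integer version is a bit cleaner but otherwise equivalent.
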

\begin{proof}
Reflexivity and transitivity of $\preceq$ are immediate from the definition. For antisymmetry, given $T\in \SYRT(\alpha)$ define a tuple $d(T)$ such that for each $1\le j \le {\rm max}(\alpha)$, the $j$th entry of $d(T)$ is the sum of the entries in the first $j$ columns of $T$. If $\pi_i(T) = s_i(T)$, then $i+1$ is strictly right of $i$ in $T$. Hence for each $1\le j \le {\rm max}(\alpha)$ we have $d(s_i(T))_j \ge d(T)_j$, and the inequality is strict in the column in which $i$ appears in $T$. Therefore, if $S\in \SYRT(\alpha)$ is obtained from $T$ by applying a sequence of the $\pi_i$, then either $S=T$ or else $d(S)_j > d(T)_j$ for some $1\le j \le {\rm max}(\alpha)$. In the latter case it is not possible to obtain $T$ from $S$ by applying operators $\pi_i$, as doing so can never decrease any entry of $d(S)$.
\end{proof}

We arbitrarily choose a total order $\preceq^\star$ on $\SYRT(\alpha)$ that extends the partial order $\preceq$. We may assume the elements of $\SYRT(\alpha)$ are ordered $T_m \preceq^\star T_{m-1} \preceq^\star \cdots \preceq^\star T_1$. For each $1\le j \le m$, let ${\bf R}_j={\rm span}\{T_1, \ldots , T_j\}$. Then for all $1\le j \le m$, ${\bf R}_j$ is a $H_n(0)$-submodule of ${\bf R}_\alpha$, and we have a filtration 
\[0:={\bf R}_0\subset {\bf R}_1 \subset {\bf R}_2 \subset \cdots \subset {\bf R}_m = {\bf R}_\alpha\]
of ${\bf R}_\alpha$.
It follows from the definition that each quotient module ${\bf R}_j/{\bf R}_{j-1}$ is one-dimensional, with basis $\{T_j\}$.

\begin{lemma}\label{lem:quotient}
For each $1\le i \le n-1$ and each $1\le j \le m$, in ${\bf R}_j/{\bf R}_{j-1}$ we have
\[\pi_i(T_j) = \begin{cases} T_j & \mbox{ if } i+1 \mbox{ is weakly left of } i \mbox{ in } T_j \\
                                             0  & \mbox{ otherwise. }
                                             \end{cases}
                                             \]
\end{lemma}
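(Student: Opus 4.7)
The plan is a case analysis on the three possibilities in the definition of $\pi_i(T_j)$. If $i+1$ is weakly left of $i$ in $T_j$, then $\pi_i(T_j)=T_j$ already in ${\bf R}_\alpha$, matching the first case of the formula. If $i+1$ is right-adjacent to $i$, then $\pi_i(T_j)=0$ in ${\bf R}_\alpha$, hence $0$ in the quotient; and this falls under ``otherwise,'' since right-adjacent is not weakly left. These cases require no further work.

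The substantive case is when $i+1$ is strictly right of $i$ in $T_j$ and not right-adjacent, so that $\pi_i(T_j) = s_i(T_j)$. To obtain $\pi_i(T_j) = 0$ in ${\bf R}_j/{\bf R}_{j-1}$, I need to show $s_i(T_j) \in {\bf R}_{j-1}$, i.e.\ that $s_i(T_j) = T_{j'}$ for some $j' < j$. Since $s_i(T_j)$ is obtained from $T_j$ by a single application of $\pi_i$, the definition of $\preceq$ gives $T_j \preceq s_i(T_j)$. Swapping the distinct entries $i$ and $i+1$ produces a different filling, so $T_j \neq s_i(T_j)$ and therefore $T_j \prec s_i(T_j)$. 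Because the total order $\preceq^\star$ extends $\preceq$, it follows that $T_j \prec^\star s_i(T_j)$, so $s_i(T_j) = T_{j'}$ with $j' < j$ (recall the indexing convention $T_m \preceq^\star \cdots \preceq^\star T_1$, under which larger elements carry smaller subscripts). Hence $s_i(T_j) \in {\bf R}_{j-1}$, and $\pi_i(T_j)$ vanishes in the quotient.

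There is no real obstacle here beyond keeping the direction of $\preceq^\star$ straight; the content of Lemma~\ref{lem:partialorder} (antisymmetry, hence strict comparability between $T_j$ and $s_i(T_j)$ when they are related by a nontrivial $\pi_i$ move) is what powers the whole argument. No further computation is needed since the definition of $\pi_i$ itself produces precisely the three cases above.
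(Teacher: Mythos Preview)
Your proof is correct and follows essentially the same approach as the paper's: a case split according to the definition of $\pi_i$, with the key point being that $s_i(T_j)\in{\bf R}_{j-1}$ in the third case. You in fact spell out more carefully than the paper why $s_i(T_j)\in{\bf R}_{j-1}$, tracing it through the definitions of $\preceq$ and $\preceq^\star$ and the indexing convention, whereas the paper simply asserts this containment.
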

\begin{proof}
If $i+1$ is weakly left of $i$ in $T_j$, then $\pi_i(T_j)=T_j$ by Theorem~\ref{thm:0Hecke}. If $i+1$ is strictly right of $i$, then $\pi_i(T_j)$ is equal to either $0$ or $s_i(T_j)$ by Theorem~\ref{thm:0Hecke}. But $s_i(T_j)=0$ in ${\bf R}_j/{\bf R}_{j-1}$, since $s_i(T_j)\in {\bf R}_{j-1}$.
\end{proof}

\begin{theorem}
Let $\alpha\vDash n$. Then $ch([{\bf R}_\alpha]) = \R_\alpha$.
\end{theorem}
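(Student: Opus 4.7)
The plan is to exploit the filtration $0 = {\bf R}_0 \subset {\bf R}_1 \subset \cdots \subset {\bf R}_m = {\bf R}_\alpha$ constructed just before the statement, using the fact that the Grothendieck group relation applied to short exact sequences gives $[{\bf R}_\alpha] = \sum_{j=1}^m [{\bf R}_j/{\bf R}_{j-1}]$. Since each quotient ${\bf R}_j/{\bf R}_{j-1}$ is one-dimensional, it suffices to identify each such quotient with one of the irreducible $H_n(0)$-representations $\mathbf{F}_\beta$ and then apply the characteristic map.

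The key step is to compare Lemma~\ref{lem:quotient} with the defining action \eqref{eq:irreps} of the irreducibles. Lemma~\ref{lem:quotient} says that in ${\bf R}_j/{\bf R}_{j-1}$, the generator $\pi_i$ fixes $T_j$ when $i+1$ is weakly left of $i$ in $T_j$, and kills $T_j$ otherwise. Unfolding the definition of $\Des(T_j)$, the condition "$i+1$ is weakly left of $i$ in $T_j$" is exactly the statement $i \notin \Des(T_j)$. Setting $\beta_j = \comp_n(\Des(T_j))$, we have $i \in \mathbb{S}(\beta_j)$ if and only if $i \in \Des(T_j)$, so the $\pi_i$-action on the basis vector $T_j$ of ${\bf R}_j/{\bf R}_{j-1}$ matches the $T_i$-action on $v_{\beta_j}$ in $\mathbf{F}_{\beta_j}$ term-by-term. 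Hence ${\bf R}_j/{\bf R}_{j-1} \cong \mathbf{F}_{\comp_n(\Des(T_j))}$ as $H_n(0)$-representations.

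Putting these isomorphisms into the Grothendieck group yields
\[
[{\bf R}_\alpha] \;=\; \sum_{j=1}^m [{\bf R}_j/{\bf R}_{j-1}] \;=\; \sum_{T \in \SYRT(\alpha)} [\mathbf{F}_{\comp_n(\Des(T))}].
\]
Applying the algebra isomorphism $ch$ and using $ch([\mathbf{F}_\beta])=F_\beta$, we conclude
\[
ch([{\bf R}_\alpha]) \;=\; \sum_{T \in \SYRT(\alpha)} F_{\comp_n(\Des(T))} \;=\; \R_\alpha,
\]
the last equality being the definition of $\R_\alpha$.

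There is no real obstacle here; the heavy lifting has already been done in Theorem~\ref{thm:0Hecke} (the operators form a valid $H_n(0)$-action), Lemma~\ref{lem:partialorder} (the order $\preceq$ is a partial order, so the filtration exists), and Lemma~\ref{lem:quotient} (the quotient action is the expected one). The only mildly delicate point to note is that one needs a consistent convention that "$i+1$ weakly left of $i$" corresponds to a non-descent, which is simply the complement of the descent set as defined.
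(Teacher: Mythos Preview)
Your proof is correct and follows essentially the same approach as the paper: identify each one-dimensional quotient ${\bf R}_j/{\bf R}_{j-1}$ with the irreducible $\mathbf{F}_{\comp_n(\Des(T_j))}$ by matching Lemma~\ref{lem:quotient} against \eqref{eq:irreps}, then sum over the filtration and apply $ch$.
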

\begin{proof}
Each of the $H_n(0)$-modules ${\bf R}_j/{\bf R}_{j-1}$ is one-dimensional, thus irreducible, and therefore isomorphic to ${\bf F}_\beta$ for some composition $\beta$. It follows from Lemma~\ref{lem:quotient} that
\[\pi_i(T_j) = \begin{cases} T_j & \mbox{ if } i \notin \Des(T_j) \\
                                             0  & \mbox{ if } i \in \Des(T_j). 
                                             \end{cases}
                                             \]
By (\ref{eq:irreps}), this implies that ${\bf R}_j/{\bf R}_{j-1}$ is isomorphic as $H_n(0)$-modules to $\mathbf{F}_{\comp_n(\Des(T_j))}$, hence $[{\bf R}_j/{\bf R}_{j-1}]=[ \mathbf{F}_{\comp_n(\Des(T_j))}]$. It follows that
\[ch([{\bf R}_\alpha]) = \sum_{j=1}^m ch([{\bf R}_j/{\bf R}_{j-1}]) = \sum_{j=1}^m ch([\mathbf{F}_{\comp_n(\Des(T_j))}]) = \!\!\!\! \sum_{T\in \SYRT(\alpha)} \!\!\!\!  F_{\comp_n(\Des(T))} = \R_\alpha.\]
\end{proof}

\subsection{Direct sum decomposition}

A remaining goal is to classify for which $\alpha$ the $H_n(0)$-module ${\bf R}_\alpha$ is indecomposable. Towards this, we decompose ${\bf R}_\alpha$ into a direct sum of nonzero submodules, and show that each of these submodules is generated by a single $\SYRT$. We proceed in a similar manner to the work of \cite{TvW:1} that establishes analogous results for modules for quasisymmetric Schur functions.

Define a relation $\sim$ on $\SYRT(\alpha)$ by declaring $T\sim T'$ if for every $k$ such that $1\le k \le \max(\alpha)$, the relative order of the entries in the $k$th column of $T$ is the same as the relative order of the entries in the $k$th column of $T'$. It is immediate that $\sim$ is an equivalence relation, hence it gives rise to a partition of $\SYRT(\alpha)$. Suppose that $\sim$ decomposes $\SYRT(\alpha)$ into equivalence classes $E_0, E_1, \ldots , E_r$, where $E_0$ denotes the class consisting of all $T\in \SYRT(\alpha)$ such that entries increase from bottom to top in every column of $T$. 

\begin{example}
In Example~\ref{ex:SYRT}, the first two $\SYRT$s form the equivalence class $E_0$. The last three $\SYRT$s together form a different equivalence class: for each of these $\SYRT$s, the second column has the smallest entry at the bottom, second-smallest entry at the top, and largest entry in the middle; and in the first and third columns entries increase from bottom to top.
\end{example}

\begin{proposition}\label{prop:E0nonempty}
For any composition $\alpha$, the equivalence class $E_0$ is nonempty.
\end{proposition}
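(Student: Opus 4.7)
The plan is to exhibit an explicit element of $E_0$. Consider the tableau $T$ of shape $\alpha$ obtained by filling $D(\alpha)$ row by row from the bottom upward, each row filled left to right with consecutive integers; equivalently, set $T(c,r) = \alpha_1 + \alpha_2 + \cdots + \alpha_{r-1} + c$ for every cell $(c,r) \in D(\alpha)$. (As a sanity check, with $\alpha = (3,2,2)$ this produces the first tableau displayed in Example~\ref{ex:SYRT}.)

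Condition (R1) is immediate from the construction, since the entries of row $r$ are $\alpha_1 + \cdots + \alpha_{r-1} + 1, \ldots, \alpha_1 + \cdots + \alpha_r$ placed in left-to-right order. Moreover, within any column $c$, the entry $T(c,r)$ is strictly increasing in $r$, which simultaneously gives (R2) and the defining property of $E_0$ (namely, that every column of $T$ increases from bottom to top). So it remains only to verify (R3).

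For (R3), given any cells $(c,r), (c+1,r') \in D(\alpha)$ with $r' < r$, a one-line computation yields
\[T(c,r) - T(c+1,r') = \alpha_{r'} + \alpha_{r'+1} + \cdots + \alpha_{r-1} - 1.\]
Since each part $\alpha_j \geq 1$ and the sum contains $r - r' \geq 1$ terms, this difference is nonnegative. Thus the hypothesis $T(c,r) < T(c+1,r')$ of (R3) is never satisfied, so (R3) holds vacuously. It follows that $T \in \SYRT(\alpha)$, and since all of its columns increase from bottom to top, $T \in E_0$. I do not anticipate any serious obstacle in this argument; the proof is an explicit construction whose only substantive content is the elementary inequality displayed above.
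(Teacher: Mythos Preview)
Your proof is correct and uses exactly the same tableau as the paper: the row-by-row filling $\overline{T}$ with $T(c,r)=\alpha_1+\cdots+\alpha_{r-1}+c$. The paper argues (R3) holds because every entry in a higher row exceeds every entry in a lower row; your explicit computation of $T(c,r)-T(c+1,r')$ is simply a more quantitative version of that same observation.
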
 
\begin{proof}
Consider the tableau $\overline{T}$ of shape $\alpha$ formed by placing the entries $1, 2, \ldots , \alpha_1$ into row $1$, the entries $\alpha_1+1, \alpha_1+2 , \ldots , \alpha_2$ into row $2$, and so on. In Example~\ref{ex:SYRT}, this is the first tableau. By construction, in each column of $\overline{T}$ the entries are increasing from bottom to top. It is straightforward to see that $\overline{T}\in \SYRT(\alpha)$: (R1) and (R2) are satisfied by construction, and (R3) is satisfied because all entries of $\overline{T}$ in a higher row are greater than any entry in a lower row.
\end{proof}

Let ${\bf R}_\alpha^{E_j}$ denote the subspace of ${\bf R}_\alpha$ given by the complex span of $E_j$. 

\begin{proposition}
Let $\alpha\vDash n$. Then for each $j$, the vector space ${\bf R}_\alpha^{E_j}$ is an $H_n(0)$-submodule of ${\bf R}_\alpha$.
\end{proposition}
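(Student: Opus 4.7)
The plan is to fix an arbitrary $T\in E_j$ and a generator $\pi_i$, and show that $\pi_i(T)\in {\bf R}_\alpha^{E_j}$; linearity then gives the result. By Theorem~\ref{thm:0Hecke}, $\pi_i(T)$ is one of $T$, $0$, or $s_i(T)$. The first two outcomes trivially lie in ${\bf R}_\alpha^{E_j}$, so the entire content of the proposition reduces to checking that whenever $\pi_i(T)=s_i(T)$, the tableau $s_i(T)$ is $\sim$-equivalent to $T$, i.e.\ belongs to $E_j$.

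For this reduction step I would use the observation already recorded in the proof of Lemma~\ref{lem:imagecontained}: in the case $\pi_i(T)=s_i(T)$ (which only occurs when $i+1$ sits strictly right of $i$ and is not right-adjacent to $i$), the entries $i$ and $i+1$ lie in distinct rows \emph{and} in distinct columns of $T$. Call these columns $c$ and $c'$. Then passing from $T$ to $s_i(T)$ modifies only columns $c$ and $c'$: column $c$ loses the value $i$ and gains $i+1$, while $c'$ does the reverse. Now for any other entry $k$ appearing in column $c$, since $k\neq i,i+1$ we have either $k<i<i+1$ or $i<i+1<k$, so the comparison of $k$ with the swapped entry is unchanged; the same dichotomy applies to column $c'$. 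All other columns are literally unchanged. Hence the column-wise relative orders in $s_i(T)$ agree with those in $T$, so $s_i(T)\sim T$ and $s_i(T)\in E_j$.

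I do not anticipate a substantive obstacle here: the only point requiring care is pinning down that in the swap case $i$ and $i+1$ genuinely occupy different columns (which follows from $i+1$ being strictly right of $i$ together with (R1) forcing them into different rows as well), and noticing that swapping two consecutive integers within a column cannot disturb the relative ordering with any third value. Once these are in hand, the argument is formal: $\pi_i$ preserves $E_j$ for every $i$, so ${\bf R}_\alpha^{E_j}$ is closed under the $H_n(0)$-action and is therefore a submodule of ${\bf R}_\alpha$.
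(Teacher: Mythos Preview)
Your proposal is correct and follows essentially the same approach as the paper: reduce to the swap case, observe that $i$ and $i+1$ occupy different columns, and note that replacing $i$ by $i+1$ (or vice versa) in a column not containing the other value preserves all relative orders, so $s_i(T)\sim T$. Your version spells out the trichotomy $k<i<i+1$ or $i<i+1<k$ explicitly, but the argument is the same.
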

\begin{proof}
It suffices to show that for any $T\in E_j$ and any $1\le i \le n-1$, we have $\pi_i(T)\in {\bf R}_\alpha^{E_j}$. This is immediate when $\pi_i(T)=T$ or $\pi_i(T)=0$. If $\pi_i(T)=s_i(T)$, then $i$ and $i+1$ are necessarily in different columns of $T$. Replacing $i$ with $i+1$ in a column that does not contain $i+1$ does not change the relative order of entries in that column; likewise for replacing $i+1$ with $i$. Hence $s_i(T)\sim T$. 
\end{proof}

Consequently, we have

\begin{corollary}\label{cor:decomposition}
Let $\alpha\vDash n$. Then ${\bf R}_\alpha$ is isomorphic as $H_n(0)$-modules to $\bigoplus_{j=0}^r {\bf R}_\alpha^{E_j}$.
\end{corollary}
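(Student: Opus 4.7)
The plan is to observe that this corollary is essentially a formal consequence of the preceding proposition together with the fact that $\sim$ is an equivalence relation. First I would note that since $\sim$ partitions $\SYRT(\alpha)$ into the disjoint classes $E_0, E_1, \ldots, E_r$, and $\SYRT(\alpha)$ is by definition a $\C$-basis of $\mathbf{R}_\alpha$, the subspaces $\mathbf{R}_\alpha^{E_j}$ are spanned by pairwise disjoint subsets of this basis whose union is the whole basis. Thus at the level of $\C$-vector spaces we have the internal direct sum decomposition $\mathbf{R}_\alpha = \bigoplus_{j=0}^r \mathbf{R}_\alpha^{E_j}$.

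Next I would invoke the preceding proposition, which establishes that each $\mathbf{R}_\alpha^{E_j}$ is an $H_n(0)$-submodule of $\mathbf{R}_\alpha$. Since each generator $T_i$ of $H_n(0)$ acts via $\pi_i$ and stabilizes every $\mathbf{R}_\alpha^{E_j}$, the $H_n(0)$-action respects the vector space decomposition. Consequently the decomposition above is in fact a decomposition of $H_n(0)$-modules, and this gives the desired isomorphism with the external direct sum $\bigoplus_{j=0}^r \mathbf{R}_\alpha^{E_j}$.

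There is no real obstacle here, as the content has already been absorbed into the preceding lemma and proposition. The only point worth being careful about is that the claim is genuinely an internal direct sum rather than a mere surjection: this is guaranteed because the defining basis $\SYRT(\alpha)$ itself partitions along the equivalence classes, so linear independence of the sum follows from linear independence of the basis. I would write the proof in one short paragraph that (i) cites Proposition (on submodule property) to say each $\mathbf{R}_\alpha^{E_j}$ is a submodule and (ii) uses the partition $\SYRT(\alpha) = \bigsqcup_j E_j$ to conclude the internal direct sum equals $\mathbf{R}_\alpha$.
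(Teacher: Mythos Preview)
Your proposal is correct and matches the paper's approach: the paper states the corollary with no proof, treating it as an immediate consequence of the preceding proposition and the fact that the equivalence classes partition the basis $\SYRT(\alpha)$. Your write-up simply makes explicit the two ingredients (basis partition gives vector-space direct sum; submodule property upgrades it to a module decomposition), which is exactly the intended reasoning.
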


This implies the following result concerning indecomposability:

\begin{corollary}\label{cor:decomposable}
Let $\alpha\vDash n$. If there exists a $T\in \SYRT(\alpha)$ whose entries in some column do not increase from bottom to top, then ${\bf R}_\alpha$ is decomposable.
\end{corollary}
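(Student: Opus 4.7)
The plan is to deduce this corollary directly from Corollary~\ref{cor:decomposition} together with Proposition~\ref{prop:E0nonempty}, by exhibiting two distinct nonempty equivalence classes of $\sim$.

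First I would observe that the hypothesis gives a tableau $T\in \SYRT(\alpha)$ which fails the defining condition of $E_0$; namely, there is some column of $T$ in which entries do not increase from bottom to top. Hence $T$ lies in some equivalence class $E_j$ with $j\ge 1$. In particular, at least one of the classes $E_1,\ldots,E_r$ is nonempty, so ${\bf R}_\alpha^{E_j}$ is a nonzero $H_n(0)$-submodule of ${\bf R}_\alpha$.

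Next I would invoke Proposition~\ref{prop:E0nonempty}, which guarantees that $E_0$ is also nonempty (it contains the tableau $\overline{T}$ obtained by filling rows in order with consecutive integers). Thus ${\bf R}_\alpha^{E_0}$ is also a nonzero $H_n(0)$-submodule. Since $E_0$ and $E_j$ are disjoint equivalence classes, the submodules ${\bf R}_\alpha^{E_0}$ and ${\bf R}_\alpha^{E_j}$ intersect trivially.

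Finally, applying Corollary~\ref{cor:decomposition}, we have
\[{\bf R}_\alpha \cong \bigoplus_{i=0}^{r} {\bf R}_\alpha^{E_i},\]
a direct sum in which both the $i=0$ and $i=j$ summands are nonzero. Therefore ${\bf R}_\alpha$ decomposes nontrivially as a direct sum of $H_n(0)$-modules, and so is decomposable. There is no real obstacle here: all the substantive work has been done in establishing that $\sim$ refines into $H_n(0)$-stable pieces and that $E_0$ is always present; the corollary is essentially a bookkeeping consequence.
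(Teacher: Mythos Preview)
Your proof is correct and follows essentially the same approach as the paper: both invoke Proposition~\ref{prop:E0nonempty} to show $E_0$ is nonempty, observe that the hypothesized $T$ lies in some other class $E_j$, and then apply Corollary~\ref{cor:decomposition} to obtain at least two nonzero summands. Your version simply spells out slightly more detail.
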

\begin{proof}
By Proposition~\ref{prop:E0nonempty}, the existence of such a $T$ implies there are at least two nonzero submodules in the expansion $\bigoplus_{j=0}^r {\bf R}_\alpha^{E_j}$. Thus by Corollary~\ref{cor:decomposition}, ${\bf R}_\alpha$ can be written as the direct sum of two nonzero submodules.
\end{proof}

This reduces the question of indecomposability to the case where ${\bf R}_\alpha = {\bf R}_\alpha^{E_0}$, i.e., for compositions $\alpha$ such that all $T\in \SYRT(\alpha)$ have entries increasing from bottom to top in every column. We now show that each equivalence class $E_j$ (and thus in particular $E_0$) contains a unique $T$ such that every $T'\in E_j$ can be obtained from $T$ by applying a sequence of the $\pi_i$ operators. As a result, ${\bf R}_\alpha^{E_j}$ is cyclic for each $j$, generated by this $T$.

Following the nomenclature of \cite{TvW:1} and \cite{Koenig}, we call $T\in E_j$ a \emph{source tableau} if there is no $T' \in E_j$ such that $T'\neq T$ and $\pi_i(T')=T$ for some $i$. We can characterize source tableaux as follows.

\begin{proposition}\label{prop:source}
Let $\alpha\vDash n$ and $T\in \SYRT(\alpha)$. Then $T$ is a source tableau if and only if for each entry $i<n$ such that $i\notin \Des(T)$, the entry $i+1$ is either in the same column as $i$, or in the column immediately left of the column containing $i$ and in a row higher than the row containing $i$.
\end{proposition}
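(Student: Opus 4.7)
The approach is to unpack the source-tableau condition directly in terms of the $\pi_i$-action, isolate the unique configuration that obstructs $s_i(T) \in \SYRT(\alpha)$, and then verify the remaining cases by inspection of (R1), (R2), (R3). The plan is to prove the contrapositive in both directions, treating ``$T$ is not a source'' as the working condition.

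First I would observe that $T$ fails to be a source tableau iff there exist $i$ and $T' \in E_j$ with $T' \neq T$ and $\pi_i(T') = T$. The case analysis of $\pi_i$ forces $T' = s_i(T)$, and the equation $\pi_i(s_i(T)) = T$ requires that in $s_i(T)$, $i+1$ is strictly right of $i$ and not right-adjacent. Unwinding, this says: in $T$, $i+1$ is strictly left of $i$ (so $i \notin \Des(T)$) and $i$ is not right-adjacent to $i+1$. Since $i+1$ strictly left of $i$ in the same row would violate (R1), $i$ and $i+1$ must lie in different rows and different columns; in particular $s_i(T) \sim T$ automatically, so membership in $E_j$ is free. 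Hence $T$ is non-source iff there exists $i$ with $i \notin \Des(T)$, with $i$ and $i+1$ in different rows and different columns, and with $s_i(T) \in \SYRT(\alpha)$.

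Next I would pinpoint the obstruction to $s_i(T) \in \SYRT(\alpha)$. Write $i$ at $(c,r)$ and $i+1$ at $(c',r')$ with $c' < c$ and $r' \neq r$. The key calculation is: if $c' = c-1$ and $r' > r$, then in $s_i(T)$ the triple with top-left $(c-1,r')$, top-right $(c,r')$, and bottom-right $(c,r)$ has top-left $i$ and bottom-right $i+1$; the top-right either equals $\infty$ (if $(c,r')\notin D(\alpha)$) or exceeds $i+1$ (since in $T$ it lies immediately right of $i+1$ in row $r'$, by (R1)). Either way (R3) is violated, so $s_i(T) \notin \SYRT(\alpha)$. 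Conversely, when we are in the remaining case (either $c' < c-1$, or $c' = c-1$ with $r' < r$), I would verify $s_i(T) \in \SYRT(\alpha)$ by routine checks: (R1) and (R2) hold since the swap affects only two cells in different rows and columns, and uniqueness of entries keeps every neighbor distinct from $i$ and $i+1$. For (R3) I would enumerate the triples in $s_i(T)$ having $i$ or $i+1$ at a corner; in each such triple the remaining corner is unchanged from $T$, and (R3) in $T$ on the analogous triple (combined with uniqueness to rule out $a = i$ or $a = i+1$) gives the required inequality. The would-be ``bad'' triple from the preceding paragraph does not arise because it demands $r' > r$ rather than $r' < r$.

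Combining, $T$ is a source iff no $i$ realizes both $i+1$ strictly left of $i$ and a non-forbidden position (neither in the same column as $i$ nor in the column immediately left of $i$ in a higher row); equivalently, every $i \notin \Des(T)$ has $i+1$ in the same column as $i$ or in the column immediately left of $i$ in a higher row, which is the claimed characterization. The main obstacle is the (R3) verification in the third paragraph: although each individual triple check is routine, one must carefully enumerate all corner positions that $i$ or $i+1$ can occupy, in both the $c' < c-1$ and $c' = c-1, r' < r$ sub-cases, and confirm that the bad configuration $c' = c-1, r' > r$ is truly the only one that can violate (R3).
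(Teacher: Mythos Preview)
Your proposal is correct and follows essentially the same approach as the paper: both directions reduce to analyzing when $s_i(T)\in\SYRT(\alpha)$ for $i+1$ strictly left of $i$, identify the configuration $c'=c-1,\ r'>r$ as the unique obstruction via the triple with top-left $i+1$ and bottom-right $i$, and verify the remaining cases. The only cosmetic difference is that the paper dispatches the (R3) check in the non-forbidden cases with the single observation that no triple can contain both $i$ and $i+1$ (adjacent-column triples force $r'>r$), whereas you propose to enumerate corner positions; your enumeration is unnecessary once you note, as you essentially do in your final sentence, that the ``bad'' triple is the only one that can involve both entries.
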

\begin{proof}
Suppose there is an entry $i<n$ such that $i+1$ is at least two columns to the left of $i$, or in the column immediately left of $i$ and below $i$ (in which case it is strictly below $i$, by (R1)). Then $i$ and $i+1$ do not share a row or a column, and there is no triple involving both $i$ and $i+1$. Hence the tableau $T'$ obtained by exchanging $i$ and $i+1$ is an $\SYRT$, and we then have $\pi_i(T') = T$, so $T$ is not a source tableau.

Conversely, suppose that for each entry $i<n$ such that $i\notin \Des(T)$, the entry $i+1$ is either in the same column as $i$, or the column immediately left of the column containing $i$ and a row higher than the row containing $i$. If there were an $\SYRT$ $T'\neq T$ such that $\pi_i(T')=T$ for some $i$, then $i+1$ must be strictly left of $i$ in $T$, and $T'$ is obtained from $T$ by exchanging $i$ and $i+1$. Then in $T$, $i+1$ must be in the column immediately left of the column containing $i$, and above $i$. But this means $i+1$ and $i$ form two cells of a triple in $T$, and exchanging them violates (R3) since the entry of the cell right-adjacent to $i+1$, if it exists must be larger than $i+1$ by (R1), and thus larger than $i$. Hence there is no $T'\neq T$ such that $\pi_i(T')=T$ for some $i$, and thus $T$ is a source tableau.
\end{proof}

For example, the second and the fourth $\SYRT$s in Example~\ref{ex:SYRT} are the source tableaux of shape $(3,2,2)$. Notice these source tableaux belong to different equivalence classes.

\begin{lemma}\label{lem:sourceexists}
For any composition $\alpha$, there exists at least one source tableau in each equivalence class in $\SYRT(\alpha)$. 
\end{lemma}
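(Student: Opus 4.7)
My plan is to identify source tableaux with the $\preceq$-minimal elements of each equivalence class, then use finiteness of $\SYRT(\alpha)$ to conclude existence. Concretely, I would fix an equivalence class $E_j$ and consider the restriction to $E_j$ of the partial order $\preceq$ from Lemma~\ref{lem:partialorder}. Since $E_j$ is nonempty (being an equivalence class of a nonempty set) and finite, this restriction admits at least one minimal element $T$; the claim is that any such $T$ is automatically a source tableau in $E_j$.

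To verify this, I would argue by contradiction. Suppose $T$ is $\preceq$-minimal in $E_j$ but fails to be a source. By the definition of a source tableau there exists $T' \in E_j$ with $T' \neq T$ and $\pi_i(T') = T$ for some $i$. Inspecting the three cases in the definition of $\pi_i$, the possibilities $\pi_i(T') = T'$ and $\pi_i(T') = 0$ would force $T = T'$ or $T = 0$ respectively, neither of which is allowed; hence the only option is $\pi_i(T') = s_i(T') = T$. This realises $T$ as the image of $T'$ under a single generator, so by definition of $\preceq$ we have $T' \preceq T$, and combined with $T' \neq T$ this gives $T' \prec T$ strictly, contradicting the minimality of $T$ in $E_j$.

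I do not anticipate a substantive obstacle here: the argument is essentially bookkeeping, translating the source-tableau condition into the statement that $T$ has no strict $\preceq$-predecessor inside its equivalence class, and then invoking the fact that a finite nonempty partially ordered set has minimal elements. The one subtlety worth flagging is that the definition of a source tableau already quantifies only over $T' \in E_j$, so there is no need to separately check that any candidate predecessor remains in the same equivalence class; this sidesteps what might otherwise be the trickiest point of the proof.
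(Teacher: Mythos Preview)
Your proposal is correct and follows essentially the same approach as the paper: both identify source tableaux with the $\preceq$-minimal elements of $E_j$ and appeal to finiteness to guarantee such minima exist. The paper is slightly terser, simply asserting that source tableaux are by definition the minimal elements, whereas you spell out the verification that a $\preceq$-minimal element must be a source tableau; but the underlying argument is the same.
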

\begin{proof}
Recall from Lemma~\ref{lem:partialorder} that $\SYRT(\alpha)$ is partially ordered by the relation $\preceq$, and thus the tableaux in $E_j$ are partially ordered by $\preceq$. The source tableaux are, by definition, the minimal elements in $E_j$ under $\preceq$. Minimal elements must exist since $\SYRT(\alpha)$, and thus $E_j$, is a finite set.
\end{proof}

We now show that each equivalence class $E_j$ contains exactly one source tableau. Given a composition $\alpha$, define a cell of $D(\alpha)$ to be \emph{removable} if it is the rightmost cell of the top row, or the rightmost cell in a row of length at least $2$ that has no row above it containing precisely one fewer cell. Given $T\in \SYRT(\alpha)$, call a removable cell of $D(\alpha)$ a \emph{distinguished removable cell} of $T$ if this cell contains the largest entry in its column. 

\begin{example}\label{ex:removable}
For $\alpha = (2,3,4,2)$, the diagram on the left shows the removable cells of $D(\alpha)$, and the diagram on the right shows an $\SYRT$ of shape $\alpha$ with distinguished removable cells indicated by bolded entries.
\[ \tableau{ {\ } & \bullet \\ {\ } & {\ } & {\ } & \bullet \\  {\ } & {\ } & {\ } \\ {\ } & \bullet } \qquad \qquad \tableau{ 9 & 10 \\ 4 & 5 & 6 & {\bf 8} \\  2 & 3 & 7 \\ 1 & {\bf 11} } \]
\end{example}

The importance of removable cells stems from the following result.

\begin{lemma}\label{lem:nremovable}
Let $\alpha\vDash n$. Then a cell $\kappa$ in $D(\alpha)$ is removable if and only if there is some $T\in \SYRT(\alpha)$ with entry $n$ in $\kappa$. Moreover, the cell with entry $n$ in any $T\in \SYRT(\alpha)$ is a distinguished removable cell.
\end{lemma}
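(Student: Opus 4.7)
The plan is to prove the biconditional in both directions; the ``moreover'' statement is immediate because $n$ is the global maximum entry of $T$ and is therefore automatically the largest entry in its column.

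For the direction asserting that any $T \in \SYRT(\alpha)$ with $n$ in $\kappa = (c,r)$ forces $\kappa$ to be removable, I argue directly. By (R1), $\kappa$ must be the rightmost cell of its row, since any cell to its right would carry an entry larger than $n$. If $r$ is the top row of $D(\alpha)$ then $\kappa$ is removable by the first clause of the definition. Otherwise row $r$ must have length at least $2$: if $c = 1$, then (R2) would demand that the cell in column $1$ of the row above $r$ carry an entry exceeding $n$, which is impossible. Finally, suppose for contradiction that some row $r' > r$ has length exactly $c-1$, so $(c-1, r') \in D(\alpha)$ while $(c, r') \notin D(\alpha)$ and hence $T(c, r') = \infty$. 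Applying (R3) to the triple with upper row $r'$ and lower row $r$: the premise $T(c-1, r') < n = T(\kappa)$ holds (since $n$ appears only at $\kappa$), so (R3) forces $T(c, r') < n$, i.e.\ $\infty < n$, a contradiction.

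For the converse, I construct an explicit witness. Given $\kappa = (c, r)$ removable, deleting $\kappa$ from $D(\alpha)$ yields a composition diagram $\alpha'$ (the removability hypotheses ensure this in each case). Let $\overline{T}$ be the tableau of shape $\alpha'$ filled row by row from the bottom with consecutive integers $1, 2, \ldots, n-1$ (the construction from Proposition~\ref{prop:E0nonempty}), and let $T$ be obtained from $\overline{T}$ by placing $n$ in $\kappa$. The plan is to verify $T \in \SYRT(\alpha)$: conditions (R1) and (R2) follow easily from $n$ being the global maximum and the row-by-row increasing structure of $\overline{T}$. For (R3) I partition the new triples (those containing $\kappa$) by the position $\kappa$ occupies in the triple. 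If $\kappa$ is the upper-left cell, (R3) is vacuous since its premise $n < T(\cdot)$ is always false. If $\kappa$ is the lower-right cell of a triple with upper row $r'' > r$, removability (no row above $r$ has length exactly $c - 1$) ensures that whenever $(c - 1, r'') \in D(\alpha)$ also $(c, r'') \in D(\alpha)$, and then $T(c, r'') < n$ gives (R3). The genuinely delicate case is when $\kappa$ is the upper-right cell: the required inequality $T(c - 1, r) \geq T(c, r'')$ for all $r'' < r$ with $(c, r'') \in D(\alpha)$ is precisely what makes the specific choice of the canonical filling $\overline{T}$ essential (a generic $\SYRT$ of shape $\alpha'$ need not satisfy this), and in $\overline{T}$ it reduces to $\alpha_{r''} + \alpha_{r''+1} + \cdots + \alpha_{r-1} \geq 1$, which is immediate. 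This last case is the main obstacle in the argument.
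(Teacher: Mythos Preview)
Your proof is correct and follows the same outline as the paper. One remark: your parenthetical claim that ``a generic $\SYRT$ of shape $\alpha'$ need not satisfy'' the inequality $T(c-1,r)\ge T(c,r'')$ is mistaken. Any $\SYRT$ $S$ of $\alpha'$ satisfies it automatically, because $(c,r)\notin D(\alpha')$ means condition (R3) for $S$ at that triple reads ``if $S(c-1,r)<S(c,r'')$ then $\infty<S(c,r'')$'', which forces the premise to fail. The paper exploits exactly this observation and therefore takes an \emph{arbitrary} $\SYRT$ of $\hat\alpha$, never needing to treat the upper-right case separately; your explicit computation with the row-by-row filling is a valid substitute, just more work than necessary.
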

\begin{proof}
Suppose $\kappa$ is a removable cell in row $i$ of $D(\alpha)$. Consider any $\SYRT$ of shape $\hat{\alpha} = (\alpha_1, \ldots , \alpha_i-1, \ldots , \alpha_{\ell(\alpha)})$, and fill the cells of $D(\alpha)$ with this $\SYRT$ filling in cells other than $\kappa$ and $n$ in $\kappa$. We claim this filling of $D(\alpha)$ is an $\SYRT$. Certainly (R1) and (R2) are satisfied. Since $\kappa$ is removable, it is the lower cell in a triple only if both upper cells are in $D(\alpha)$. This means the upper cells will both have entries smaller than $n$, thus (R3) is also satisfied.

Conversely, suppose $\kappa$ is not removable. If $\kappa$ is not at the end of a row, then by (R1) it cannot have entry $n$ in any $\SYRT$. If $\kappa$ is at the end of row $i$ but there is a row $j$ above row $i$ exactly one cell shorter than row $i$, then filling the last cell of row $i$ with $n$ would cause a violation of (R3) with the entry of the last cell in row $j$.

Finally, since $n$ is necessarily the largest entry in its column, the removable cell occupied by $n$ is distinguished.
\end{proof}

Since all $T\in E_j$ have the same relative order of entries in each column, they have the same distinguished removable cells. Therefore, define ${\rm DR}(E_j)$ to be the set of column indices of $\SYRT$s in $E_j$ that contain a distinguished removable cell. For example, the $\SYRT$ in Example~\ref{ex:removable} has distinguished removable cells in columns $2$ and $4$, and thus its equivalence class $E_j$ has ${\rm DR}(E_j)=\{2,4\}$.

\begin{lemma}\label{lem:positionofn}
Let $\alpha\vDash n$, let $E_j$ be an equivalence class for $\SYRT(\alpha)$, and let $T\in E_j$ be a source tableau. If $M$ is the largest element of ${\rm DR}(E_j)$, then the distinguished removable cell in column $M$ of $T$ has entry $n$.
\end{lemma}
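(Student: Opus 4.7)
The plan is to argue by contradiction. Let $T$ be a source tableau in $E_j$, and for each entry $i$ write $c_i$ for the column of $i$ in $T$. By Lemma~\ref{lem:nremovable}, $n$ occupies a distinguished removable cell, so $c_n \in {\rm DR}(E_j)$ and hence $c_n \le M$. Suppose for contradiction that $c_n < M$, let $\kappa$ be the distinguished removable cell of $T$ in column $M$, and let $k$ be its entry. Then $k < n$ and $k$ is the largest entry in column $M$.

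The first key step is to establish $c_{k+1} > M$. Certainly $c_{k+1} \neq M$ by maximality of $k$ in column $M$. If $c_{k+1} < M$, then $k \notin \Des(T)$, so Proposition~\ref{prop:source} forces $c_{k+1} = M - 1$ with $k+1$ in some row $r'$ strictly above the row $r$ containing $k$. Since $r'$ contains a cell in column $M-1$, its length is at least $M-1$; but removability of $\kappa$ rules out any row above $r$ of length exactly $M-1$, so the length of $r'$ is at least $M$. The cell in column $M$ of row $r'$ therefore exists, and its entry must simultaneously exceed $k+1$ by (R1) and be at most $k$ by the maximality of $k$ in column $M$, a contradiction.

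Granted $c_{k+1} > M$, I would next trace the sequence $c_{k+1}, c_{k+2}, \ldots, c_n$, which begins strictly above $M$ and ends strictly below $M$, and select the smallest $j$ with $c_j \le M$. Necessarily $k+2 \le j \le n$, and $c_{j-1} > M \ge c_j$, so $j-1 \notin \Des(T)$. Proposition~\ref{prop:source} then restricts $c_j$ to $\{c_{j-1}, c_{j-1} - 1\}$, and the inequality $c_j \le M < c_{j-1}$ pins down $c_{j-1} = M+1$ and $c_j = M$. But this places $j > k$ in column $M$, contradicting the maximality of $k$ there.

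The main obstacle is the first step: eliminating the case $c_{k+1} = M-1$ is the only place in the argument where the precise definition of a removable cell (rather than merely the distinguishedness of $\kappa$) is used, and it requires carefully combining the source tableau constraint from Proposition~\ref{prop:source} with the shape-theoretic condition for removability.
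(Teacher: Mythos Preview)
Your proof is correct and follows essentially the same approach as the paper's: both argue by contradiction, both handle the case where $k+1$ sits in column $M-1$ above $k$ by combining removability of $\kappa$ with the distinguishedness of $\kappa$ in exactly the way you describe, and both then trace the entries between $k$ and $n$ to find one that crosses column $M$. The only cosmetic difference is that you invoke Proposition~\ref{prop:source} directly, whereas the paper works from the definition of source tableau and explicitly exhibits a tableau $T'$ with $\pi_i(T')=T$; correspondingly, your final contradiction (an entry $j>k$ landing in column $M$) differs slightly from the paper's (a pair $\delta,\delta-1$ at least two columns apart that can be swapped), but both are immediate from the same underlying observation.
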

\begin{proof}
By Lemma~\ref{lem:sourceexists} there is at least one source tableau in $E_j$. Moreover by Lemma~\ref{lem:nremovable}, we know the index of the column of $T$ containing $n$ is an element of ${\rm DR}(E_j)$. Suppose the index of this column is $N<M$, and thus we have some entry $i<n$ in the distinguished removable cell in column $M$. We will show that $T$ cannot be a source tableau. Define a set
\[\mathcal{S} = \{k\in \mathbb{N} : i < k \le n \mbox{ and $k$ is in a column strictly left of column $M$} \}.\]

First suppose $i+1\in \mathcal{S}$. Then $i+1$ is strictly left of $i$ in $T$, and thus in a different row than $i$ by (R1). If $i+1$ is at least two columns to the left of $i$, or in the column immediately left of $i$ and in a row below $i$, then the tableau $T'$ obtained from $T$ by exchanging $i$ and $i+1$ is in $E_j$, and we have $\pi_i(T') = s_i(T') = T$, so $T$ is not a source tableau. We conclude this case by noting that $i+1$ cannot be in the column immediately left of $i$ and above $i$. Suppose it were. Then $i+1$ cannot be the last entry in its row, as this would contradict $i$ occupying a removable cell. But if $i+1$ is not the last entry in its row, then the entry right-adjacent to $i+1$ is greater than $i+1$ by (R1) and in the same column as $i$, contradicting that $i$ is in a \emph{distinguished} removable cell. 

Now suppose $i+1\notin \mathcal{S}$. Let $\delta$ denote the minimum element of $\mathcal{S}$ (note $n\in \mathcal{S}$, so $\mathcal{S}$ is nonempty and thus has a minimum element). Notice that since $\delta-1 \ge i+1$, we have $\delta-1\notin \mathcal{S}$, that is, $\delta-1$ is weakly right of $i$. In fact $\delta-1$ is strictly right of $i$, since $i<\delta-1$ is the largest entry in its column. On the other hand $\delta$ is strictly left of $i$, so $\delta$ is at least two columns left of $\delta-1$. Therefore, by (R1), $\delta$ and $\delta-1$ are in different rows. It follows that the tableau $T'$ obtained by swapping $\delta$ and $\delta-1$ in $T$ is in $E_j$, and then $\pi_{\delta-1}(T') = T$, so $T$ is not a source tableau.
\end{proof}

\begin{corollary}\label{cor:sourceunique}
Let $\alpha\vDash n$. Every equivalence class $E_j$ for $\SYRT(\alpha)$ has a unique source tableau.
\end{corollary}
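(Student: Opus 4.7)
The plan is to prove uniqueness by induction on $n = |\alpha|$, using Lemma~\ref{lem:positionofn} to pin down the cell containing the entry $n$ in any source tableau of a given equivalence class, and thereby reduce to a smaller composition. The base case $n \le 1$ is immediate, since $\SYRT(\alpha)$ has only one element.

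For the inductive step, let $T, T' \in E_j$ be source tableaux of shape $\alpha \vDash n$. By Lemma~\ref{lem:positionofn}, both $T$ and $T'$ place the entry $n$ in the same cell $\kappa$, namely the distinguished removable cell in column $M = \max({\rm DR}(E_j))$. Removing $\kappa$ from $D(\alpha)$ yields the diagram of a composition $\hat{\alpha}$ obtained from $\alpha$ by subtracting one from the row of $\kappa$ (or by dropping that row if $\kappa$ was its only cell). Deleting $\kappa$ from $T$ and $T'$ produces fillings $\hat{T}$ and $\hat{T}'$ of $D(\hat{\alpha})$. I would verify three things: (i) $\hat{T}, \hat{T}' \in \SYRT(\hat{\alpha})$; (ii) $\hat{T} \sim \hat{T}'$; and (iii) $\hat{T}$ and $\hat{T}'$ are source tableaux in $\SYRT(\hat{\alpha})$. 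Granting these, the inductive hypothesis forces $\hat{T} = \hat{T}'$, and restoring the entry $n$ in the common cell $\kappa$ yields $T = T'$.

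Claim (i) is the main technical step. Conditions (R1) and (R2) transfer trivially. For (R3), the only triples in $D(\hat{\alpha})$ whose validity needs reconsideration are those that existed in $D(\alpha)$ with $\kappa$ as their upper-right cell (and whose upper-right entry in $\hat{T}$ is now read as $\infty$). For such a triple with upper-left cell $(c, r)$ and lower-right cell $(c+1, r')$, applying (R3) in $T$, whose upper-right entry is $T(\kappa) = n$, forces $T(c, r) > T(c+1, r')$, since otherwise (R3) would demand $n < T(c+1, r')$, which is impossible. This strict inequality, inherited by $\hat{T}$ because the entries at $(c, r)$ and $(c+1, r')$ are unchanged, is exactly what (R3) requires in $\hat{T}$. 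Claim (ii) follows because $T \sim T'$ and removing the maximum-entry cell in column $M$ from each tableau preserves matching column-wise relative orders in every column. For claim (iii), the source characterization of Proposition~\ref{prop:source} applied to $\hat{T}$ concerns pairs $(i, i+1)$ with $i \le n - 2$; for such $i$, the cells containing $i$ and $i+1$ in $\hat{T}$ coincide with those in $T$, and $i \in \Des(\hat{T}) \iff i \in \Des(T)$, so the source condition for $\hat{T}$ is inherited directly from the source condition of $T$ restricted to these indices, and the same argument applies to $\hat{T}'$.

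The main obstacle is the triple check in claim (i): a triple for which $\kappa$ served as the upper-right cell in $T$ is effectively weakened in $\hat{T}$ (since the upper-right slot becomes $\infty$), so it is a priori possible that (R3) fails after removing $\kappa$. The argument above resolves this by exploiting the maximality of $n$ in $T$. Once this is handled, the remaining verifications are purely bookkeeping about which positions and descents are preserved under removal of the top entry.
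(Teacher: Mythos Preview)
Your proof is correct and follows the same inductive strategy as the paper's proof: use Lemma~\ref{lem:positionofn} to locate $n$ in any source tableau, delete that cell, and apply the inductive hypothesis. The paper simply asserts that the deleted tableaux are source tableaux in the same equivalence class for a composition of $n-1$, whereas you carefully verify (R3) survives deletion, equivalence is preserved, and the source characterization of Proposition~\ref{prop:source} transfers to indices $i\le n-2$; these are exactly the details the paper leaves implicit.
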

\begin{proof}
By Lemma~\ref{lem:sourceexists}, we know that $E_j$ has at least one source tableau. To prove it is unique, we proceed by induction on $n=|\alpha|$. If $n=1$, then $\SYRT(\alpha)$ has only one element, and thus no more than one source tableau. Now let $n>1$ and suppose that for every $\alpha'\vDash n-1$ each equivalence class for $\SYRT(\alpha')$ has a unique source tableau. Suppose further that $T, S\in E_j\subseteq \SYRT(\alpha)$ are source tableaux. Then by Lemma~\ref{lem:positionofn}, $n$ occupies the same cell in $T$ as it does in $S$. 

Let $T'$ and $S'$ denote the tableaux obtained by deleting the cell with entry $n$ from $T$ and $S$. It is clear that $T'$ and $S'$ are source tableaux for a composition of $n-1$, and belong to the same equivalence class. Hence $T'=S'$ by the inductive hypothesis, and therefore $T=S$.
\end{proof}

The next result follows immediately from Corollary~\ref{cor:sourceunique}

\begin{corollary}\label{cor:cyclic}
Each submodule ${\bf R}_\alpha^{E_j}$ of ${\bf R}_\alpha$ is cyclic, generated by the unique source tableau in $E_j$. 
\end{corollary}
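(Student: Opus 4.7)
The plan is to show that every $T'\in E_j$ satisfies $T_j \preceq T'$, where $T_j$ denotes the unique source tableau of $E_j$ furnished by Corollary~\ref{cor:sourceunique}. By the definition of $\preceq$, this is exactly the assertion that $T'$ lies in $H_n(0)\cdot T_j$, so that ${\bf R}_\alpha^{E_j}$ is cyclic with generator $T_j$.

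First I would fix an arbitrary $T'\in E_j$ and construct a descending chain below it. If $T'$ is not a source tableau, then by definition there exist some $S\in \SYRT(\alpha)$ and some index $i$ with $S\ne T'$ and $\pi_i(S)=T'$; necessarily $\pi_i(S)=s_i(S)$. The argument used in showing each ${\bf R}_\alpha^{E_j}$ is a submodule (that $s_i$ preserves the relative column orders) yields $S\in E_j$, and by construction $S\prec T'$ under the partial order of Lemma~\ref{lem:partialorder}. Iterating this step produces a strictly descending chain $T'\succ S_1\succ S_2\succ\cdots$ in $E_j$, which must terminate since $E_j$ is finite.

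The terminal element of this chain has no strict predecessor in $E_j$ and is therefore a source tableau of $E_j$; by Corollary~\ref{cor:sourceunique} it must coincide with $T_j$. Reading the chain from bottom to top exhibits an explicit finite sequence of $\pi_i$ operators sending $T_j$ to $T'$, so $T_j\preceq T'$ and hence $T'\in H_n(0)\cdot T_j$. Since $E_j$ spans ${\bf R}_\alpha^{E_j}$, we conclude $H_n(0)\cdot T_j = {\bf R}_\alpha^{E_j}$.

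I do not anticipate any substantive obstacle here: all the structural work—the partial order $\preceq$, the submodule decomposition into the ${\bf R}_\alpha^{E_j}$, and the existence and uniqueness of a source tableau in each equivalence class—has already been established. The only conceptual point worth stating explicitly is that the relation $\pi_i(S)=T'$ with $S\ne T'$ is precisely what allows one to \emph{invert} a descent step into an application of $\pi_i$, turning the descending chain from $T'$ to $T_j$ into a generating sequence from $T_j$ to $T'$.
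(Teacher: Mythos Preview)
Your proposal is correct and is precisely the argument the paper has in mind; the paper simply states that the corollary ``follows immediately from Corollary~\ref{cor:sourceunique}'' without writing out the descending-chain argument, and you have unpacked that implication correctly. One small remark: the definition of source tableau already quantifies over $S\in E_j$ rather than $S\in\SYRT(\alpha)$, so your separate verification that $S\in E_j$ is redundant (though not wrong).
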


\subsection{Simple compositions}

Before addressing indecomposability, we characterize the compositions that give rise to only a single equivalence class of $\SYRT$s. Following the nomenclature of \cite{TvW:1}, define a composition $\alpha$ to be \emph{simple} if whenever $\alpha_j\ge \alpha_i \ge 2$ for some $1\le i <j \le \alpha$, there is some $k$ such that $i \le k \le j$ and $\alpha_k = \alpha_i-1$. In other words, given a pair of rows in $D(\alpha)$ where the lower row is weakly shorter (and of length at least 2), there is another row weakly between this pair of rows that is one cell shorter than the lower one.

\begin{example}
The compositions $(2,1,1,3)$ and $(4,2,1,2)$ (left) are simple, whereas $(2,3,1,4)$ and $(3,3,3,1)$ (right) are not.
\[ \tableau{ {\ } & {\ } & {\ }  \\  {\ } \\ {\ } \\ {\ } & {\ }  \\ } \qquad  \tableau{ {\ } & {\ }  \\  {\ } \\ {\ } & {\ } \\ {\ } & {\ } & {\ } & {\ } \\ } \qquad \qquad \qquad \tableau{ {\ } & {\ } & {\ } & {\ } \\  {\ } \\   {\ } & {\ } & {\ } \\ {\ } & {\ } \\ } \qquad \tableau{ {\ } \\  {\ } & {\ } & {\ } \\  {\ } & {\ } & {\ } \\ {\ } & {\ } & {\ } \\  } \]
\end{example}

\begin{remark}
Compositions that give rise to a single equivalence class in the modules for quasisymmetric Schur functions are classified in \cite{TvW:1}. As mentioned in the introduction, the above condition for simplicity of a composition is the same as that in \cite{TvW:1}, up to reversal. 
This is due to the fact that in both cases the number of equivalence classes depends only on $\alpha$ and the definitions of the standard tableaux indexing the fundamental expansion, which are similar. 
Accordingly, the argument below that simple compositions characterise existence of only a single equivalence class proceeds similarly to the analogous argument in \cite{TvW:1}. On the other hand, the descent structure of $\SYRT$s differs from that of the standard composition tableaux defining quasisymmetric Schur functions, hence the actual proof of indecomposability in Section~\ref{sec:indecomposability} differs considerably from that in \cite{TvW:1}; see Remark~\ref{rmk:indecomposability}.
\end{remark}

\begin{lemma}\label{lem:simpleinduct}
Let $\alpha\vDash n$ be simple, with $D(\alpha)$ having a removable cell in row $i$. Then $\hat{\alpha} = (\alpha_1 ,\ldots , \alpha_i-1,\ldots , \alpha_{\ell(\alpha)})$ is also simple.
\end{lemma}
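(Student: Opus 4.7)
The plan is to show the simplicity condition for $\hat{\alpha}$ directly. Take any $a<b$ with $\hat{\alpha}_b\ge \hat{\alpha}_a\ge 2$; I want to find $k\in[a,b]$ with $\hat{\alpha}_k = \hat{\alpha}_a-1$. Since $\hat{\alpha}$ differs from $\alpha$ only at position $i$ (where $\hat{\alpha}_i = \alpha_i-1$), I will run a case analysis on the location of $i$ relative to the interval $[a,b]$, and in each case transfer the problem to simplicity of $\alpha$, using the removability hypothesis to handle the cases where the witness index produced by simplicity of $\alpha$ happens to be $i$.

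The easy case is $i\notin[a,b]$: then $\alpha$ and $\hat{\alpha}$ agree on $[a,b]$, and simplicity of $\alpha$ applied to $(a,b)$ directly produces the required $k$. The case $i=b$ is also straightforward: here $\alpha_b = \hat{\alpha}_b+1 > \hat{\alpha}_a = \alpha_a \ge 2$, so simplicity of $\alpha$ on $(a,b)$ yields some $k\in[a,b]$ with $\alpha_k = \alpha_a-1$; since $\alpha_b\ge \alpha_a+1$ rules out $k=b$, we have $k\neq i$ and thus $\hat{\alpha}_k = \alpha_k = \hat{\alpha}_a-1$.

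The two delicate cases invoke removability. In the case $i=a$, note that $a<b$ forces $i\neq \ell(\alpha)$, so removability gives $\alpha_i\ge 2$ and no row $j>i$ has $\alpha_j = \alpha_i-1$. Then $\hat{\alpha}_a = \alpha_a-1\ge 2$ and $\hat{\alpha}_b=\alpha_b\ge \alpha_a-1$; if $\alpha_b=\alpha_a-1$ that directly contradicts removability, and if $\alpha_b\ge \alpha_a$ then simplicity of $\alpha$ on $(a,b)$ produces $k\in[a,b]$ with $\alpha_k=\alpha_a-1$, which cannot be $k=a$ and for $k>a$ again contradicts removability; hence this case is vacuous. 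In the case $a<i<b$, apply simplicity of $\alpha$ to $(a,b)$ to obtain $k\in[a,b]$ with $\alpha_k=\alpha_a-1$. If $k\neq i$ we are done. If $k=i$ then $\alpha_i = \alpha_a-1$, so in particular $\alpha_i\ge 2$ (since $\alpha_a\ge \hat{\alpha}_a+1$ might be $2$, but one checks removability forces $\alpha_i\ge 2$ anyway), and $\alpha_b\ge \alpha_a = \alpha_i+1 > \alpha_i$; a second application of simplicity of $\alpha$ to the pair $(i,b)$ would yield some $k'\in[i,b]$ with $\alpha_{k'}=\alpha_i-1$, but removability forbids this value in any row $j>i$, forcing $k'=i$ and the contradiction $\alpha_i = \alpha_i-1$.

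The conceptual obstacle is bookkeeping in the nested case $a<i<b$ with $k=i$: the trick is to re-apply simplicity of $\alpha$ to the new pair $(i,b)$ and then use the removability clause (no row $j>i$ of length $\alpha_i-1$) to extract a contradiction. Once this observation is in place the remaining cases are mechanical, and the four cases together cover all possibilities, establishing that $\hat{\alpha}$ is simple.
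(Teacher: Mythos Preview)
Your proof is correct and follows essentially the same case analysis as the paper's, splitting according to the position of $i$ relative to the pair $(a,b)$ and using removability to rule out the problematic witnesses. One small slip: in the case $a<i<b$ you write ``since $\alpha_a\ge \hat{\alpha}_a+1$ might be $2$'', but here $a\neq i$ so in fact $\alpha_a=\hat{\alpha}_a$; fortunately you immediately supply the correct reason ($i<b\le \ell(\alpha)$ so $i\neq\ell(\alpha)$, whence removability gives $\alpha_i\ge 2$), and the rest of the argument goes through. The paper's version differs only cosmetically: it first proves once and for all that every row above row $i$ has length at most $\alpha_i-2$ (combining simplicity and removability), and then uses this structural fact to dispatch the cases $i=a$ and $a<i<b$ simultaneously, whereas you re-derive the needed contradiction in each case by a fresh appeal to simplicity on the pair $(i,b)$.
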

\begin{proof}
We check that each pair of rows in $D(\hat{\alpha})$ satisfies the condition for simplicity. Clearly this condition is satisfied for any pair of rows that are both above or both below row $i$, since these rows and all rows between them are the same as in $D(\alpha)$. For a pair consisting of row $i$ and some lower row, the condition is clearly satisfied since if row $i$ of $D(\hat{\alpha})$ is weakly longer than the lower row, then certainly row $i$ of $D(\alpha)$ is also longer than the lower row, and the condition was satisfied for these rows in $D(\alpha)$.

Each row above row $i$ in $D(\alpha)$ is at least two cells shorter than row $i$. This is because there cannot exist a row above row $i$ that is exactly one cell shorter, since row $i$ has a removable cell; and the existence of a row above row $i$ that was weakly longer would imply (by simplicity of $\alpha$) the existence of a row above row $i$ that was exactly one cell shorter than row $i$, a contradiction. 

Hence for all $j>i$, we have $\hat{\alpha}_j<\hat{\alpha}_i$, so every pair of rows involving row $i$ and a row above it in $D(\hat{\alpha})$ satisfies the condition to be simple. The remaining case is a pair of rows with one strictly below and one strictly above row $i$. The only way this pair could fail the condition is if $\alpha_i$ was the only part of $\alpha$ satisfying the condition for this pair in $D(\alpha)$, that is, the higher row in the pair is weakly longer than the lower row and the $i$th row is one cell shorter than the lower row. But this would mean that row $i$ in $\hat{\alpha}$ is shorter than a row above it (namely, the higher in the pair), which we have seen is impossible.
\end{proof}

\begin{proposition}\label{prop:simple}
A composition $\alpha$ is simple if and only if for every $T\in \SYRT(\alpha)$, entries increase from bottom to top in each column of $T$.
\end{proposition}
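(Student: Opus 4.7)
I would prove both implications by induction on $n=|\alpha|$, with base case $n=1$ trivial. The argument uses Lemma~\ref{lem:nremovable} to locate entry $n$ in an $\SYRT$ and Lemma~\ref{lem:simpleinduct} to pass simplicity to a smaller composition.

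\emph{Forward direction.} Assume $\alpha$ is simple and fix $T\in\SYRT(\alpha)$. By Lemma~\ref{lem:nremovable} the entry $n$ occupies a removable cell $(\alpha_i,i)$. The key first step is to show $n$ sits at the top of column $\alpha_i$: if $i=\ell(\alpha)$ this is automatic, and otherwise any row $j>i$ with $\alpha_j\ge\alpha_i$ would, by simplicity applied to $(i,j)$, produce $k\in(i,j)$ with $\alpha_k=\alpha_i-1$, contradicting removability of $(\alpha_i,i)$. Deleting the cell containing $n$ then yields an $\SYRT$ of $\hat\alpha=(\alpha_1,\dots,\alpha_i-1,\dots,\alpha_{\ell(\alpha)})$, which is simple by Lemma~\ref{lem:simpleinduct}. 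By the inductive hypothesis its columns increase from bottom to top, and reinserting $n$ atop column $\alpha_i$ preserves this.

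\emph{Reverse direction.} Assume $\alpha$ is not simple; I would construct $T\in\SYRT(\alpha)$ with a column not increasing from bottom to top, splitting into three cases based on the top row. (i) If $\alpha_{\ell(\alpha)}=1$, every bad pair of $\alpha$ has higher row of length $\ge 2$ and so lies strictly below the top row; hence $\alpha'=(\alpha_1,\dots,\alpha_{\ell(\alpha)-1})$ is also non-simple, and I would apply induction to $\alpha'$ and adjoin $n$ alone as a new top row. (ii) If $\alpha_{\ell(\alpha)}\ge 2$ and $\hat\alpha=(\alpha_1,\dots,\alpha_{\ell(\alpha)}-1)$ is non-simple, I would apply induction to $\hat\alpha$ and append $n$ at $(\alpha_{\ell(\alpha)},\ell(\alpha))$. (iii) If $\alpha_{\ell(\alpha)}\ge 2$ but $\hat\alpha$ is simple, the key observation is that every bad pair of $\alpha$ must have the form $(i,\ell(\alpha))$ with $\alpha_i=\alpha_{\ell(\alpha)}$: a bad pair with $j<\ell(\alpha)$ would survive unchanged in $\hat\alpha$, and in the pair $(i,\ell(\alpha))$ the case $\alpha_{\ell(\alpha)}>\alpha_i$ would give, in $\hat\alpha$ (where $\hat\alpha_{\ell(\alpha)}=\alpha_{\ell(\alpha)}-1\ge\alpha_i$), a pair violating simplicity of $\hat\alpha$. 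Fixing such $i$, the bad pair condition together with the absence of rows above $\ell(\alpha)$ makes $(\alpha_i,i)$ a removable cell of $D(\alpha)$. I would then fill the shape $(\alpha_1,\dots,\alpha_i-1,\dots,\alpha_{\ell(\alpha)})$ with the natural $\SYRT$ from the proof of Proposition~\ref{prop:E0nonempty} and insert $n$ at $(\alpha_i,i)$; since $\alpha_{\ell(\alpha)}=\alpha_i$, row $\ell(\alpha)$ lies in column $\alpha_i$, so this column contains $n$ at row $i$ and a smaller entry at the higher row $\ell(\alpha)$, witnessing the desired descent.

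In each reverse-direction construction I would routinely verify axioms (R1)--(R3) after the insertion and check that the non-increasing column survives. The main obstacle is case (iii): one must extract from the joint hypotheses ``$\alpha$ not simple'' and ``$\hat\alpha$ simple'' the rigid structural conclusion $\alpha_i=\alpha_{\ell(\alpha)}$, and then observe that the bad pair condition is precisely what is needed for (R3) to hold at triples involving the inserted $n$ — namely that no row $r>i$ has $\alpha_r=\alpha_i-1$, so the potentially problematic cell $(\alpha_i,r)$ always lies in $D(\alpha)$ and the $\infty$-value in the (R3) condition never arises.
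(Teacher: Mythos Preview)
Your forward direction is essentially identical to the paper's. Your reverse direction is correct, but it takes a more circuitous route than the paper. The paper avoids induction entirely for this direction: given a bad pair $(i,j)$, it splits $\alpha$ into $\alpha_{\mathrm{low}}=(\alpha_1,\ldots,\alpha_j)$ and $\alpha_{\mathrm{high}}=(\alpha_{j+1},\ldots,\alpha_{\ell(\alpha)})$, observes that $(\alpha_i,i)$ is removable in $D(\alpha_{\mathrm{low}})$ (since $j$ is now the top row and the bad-pair condition rules out any row of length $\alpha_i-1$ between rows $i$ and $j$), invokes Lemma~\ref{lem:nremovable} to place the entry $|\alpha_{\mathrm{low}}|$ there, and fills $\alpha_{\mathrm{high}}$ arbitrarily on top. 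Since row $j$ has length $\ge\alpha_i$, the cell $(\alpha_i,i)$ is not highest in its column, and the witness is produced in one stroke.

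Your three-case induction on the top row works, and the analysis in case (iii) --- extracting $\alpha_i=\alpha_{\ell(\alpha)}$ from the hypothesis that $\hat\alpha$ is simple, and checking (R3) for the inserted $n$ --- is correct. But notice that your case (iii) is really a special instance of the paper's direct construction with $j=\ell(\alpha)$; the paper's trick of truncating at the higher row $j$ of the bad pair is what lets one bypass the induction and the case split entirely. What the paper's approach buys is brevity and uniformity; what yours buys is perhaps a more explicit witness in the base case (iii), since you pin down the filling as the $\overline{T}$ of Proposition~\ref{prop:E0nonempty} rather than an arbitrary $\SYRT$.
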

\begin{proof}
Suppose $\alpha\vDash n$ is simple. We proceed by induction on $n$; the base case where $n=1$ is clear. Suppose that entries increase from bottom to top in all columns for every simple composition of $n-1$. Consider the entry $n$ in $T$. By Lemma~\ref{lem:nremovable} the cell containing $n$ is always removable, and then, since $\alpha$ is simple, this cell is the highest in its column (the existence of a weakly longer row above this cell would imply the existence of another row above that is one cell shorter, contradicting removability). Now delete this cell; by Lemma~\ref{lem:simpleinduct} the resulting tableau is an $\SYRT$ for a simple composition of $n-1$. By the inductive hypothesis, entries all columns in this $\SYRT$ increase from bottom to top. Then since $n$ is highest in its column in $T$, entries in all columns of $T$ increase from bottom to top.

Conversely, suppose $\alpha$ is not simple. Then there exist a pair of rows of $D(\alpha)$ with indices $i<j$, each of length at least 2, such that the higher row $j$ is weakly longer than the lower row $i$ and there is no row between them that is one cell shorter than row $i$. Define compositions $\alpha_{{\rm low}} = (\alpha_1, \ldots , \alpha_j)$ and $\alpha_{{\rm high}} = (\alpha_{j+1}, \ldots , \alpha_{\ell(\alpha)})$. Then in $D(\alpha_{{\rm low}})$, the rightmost cell in row $i$ is removable. Choose any $T_{{\rm low}}\in \SYRT(\alpha_{{\rm low}})$ such that the largest entry $|\alpha_{{\rm low}}|$ in $T_{{\rm low}}$ is in this removable cell (this can be done by Lemma~\ref{lem:nremovable}). Observe that this removable cell is not the highest cell in its column, hence entries do not increase upwards in this column. Now choose any filling $T_{{\rm high}}$ of $D(\alpha_{{\rm high}})$ with the entries $|\alpha_{{\rm low}}|+1, \ldots , n$ that satisfies the $\SYRT$ conditions. Then the filling of $D(\alpha)$ whose lowest $j$ rows are filled as $T_{{\rm low}}$ and remaining rows are filled as $T_{{\rm high}}$ is an $\SYRT$, and has a column in which entries do not increase from bottom to top.  
\end{proof}

\section{Classification of indecomposability}\label{sec:indecomposability}

In this section we establish the following theorem, classifying when ${\bf R}_\alpha$ is indecomposable.

\begin{theorem}\label{thm:main}
The $H_n(0)$-module ${\bf R}_\alpha$ is indecomposable if and only if $\alpha$ is simple.  
\end{theorem}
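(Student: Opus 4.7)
The theorem has a straightforward direction and a harder direction. The ``only if'' direction is immediate from the earlier development: if $\alpha$ is not simple, Proposition~\ref{prop:simple} produces a $T\in\SYRT(\alpha)$ having a column in which entries do not increase from bottom to top, and then Corollary~\ref{cor:decomposable} gives that ${\bf R}_\alpha$ is decomposable.

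For the ``if'' direction, assume $\alpha$ is simple. Then Proposition~\ref{prop:simple} says $E_0$ is the only equivalence class of $\SYRT(\alpha)$, so by Corollary~\ref{cor:decomposition} we have ${\bf R}_\alpha = {\bf R}_\alpha^{E_0}$. It therefore suffices to prove ${\bf R}_\alpha^{E_0}$ is indecomposable. By Corollary~\ref{cor:cyclic}, ${\bf R}_\alpha^{E_0}$ is cyclic with generator the unique source tableau $T_0\in E_0$ from Corollary~\ref{cor:sourceunique}. The plan is to show that $\mathrm{End}_{H_n(0)}({\bf R}_\alpha^{E_0})\cong\mathbb{C}$, so that the only idempotent endomorphisms are $0$ and the identity, yielding indecomposability.

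Given any endomorphism $\phi$, write $\phi(T_0) = \sum_{T\in E_0} c_T T$. For each $i\notin\Des(T_0)$ we have $\pi_i T_0 = T_0$, so $\phi(T_0)$ must be $\pi_i$-fixed. Expanding $\pi_i$ on each basis element in the three cases $\pi_i T\in\{T,0,s_i(T)\}$ and matching coefficients yields $c_T=0$ whenever $i\in\Des(T)$; intersecting these constraints over all $i\notin\Des(T_0)$ forces $\Des(T)\subseteq\Des(T_0)$ for every $T$ in the support of $\phi(T_0)$. The crux is then the following uniqueness claim: \emph{in $E_0$, the source tableau $T_0$ is the only element $T$ satisfying $\Des(T)\subseteq\Des(T_0)$}. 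Granting this, $\phi(T_0)=cT_0$ for some scalar $c$; cyclicity of ${\bf R}_\alpha^{E_0}$ then gives $\phi=c\cdot\mathrm{id}$, and if $\phi$ is idempotent then $c\in\{0,1\}$, as required.

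The main obstacle is proving this uniqueness claim. Since the descent set of an $\SYRT$ records those $i$ for which $i+1$ lies strictly to the right of $i$ (so any right-adjacent pair $i,i+1$ automatically yields a descent at $i$), its combinatorics differs from that of the composition tableaux underlying quasisymmetric Schur functions, and the argument of \cite{TvW:1} does not transfer directly. My approach would be induction along the partial order $\preceq$ of Lemma~\ref{lem:partialorder}: for $T\neq T_0$ in $E_0$, choose a covering relation $T'\prec T$ with $T=s_i(T')$ (so $i\in\Des(T')$ and $i+1$ is not right-adjacent to $i$ in $T'$), and analyse how the descent set shifts under the swap. A single swap of $i$ and $i+1$ removes $i$ from the descent set, but may introduce new descents at $i-1$ or $i+1$ depending on the positions of $i-1$ and $i+2$; using the column-increasing structure of $E_0$ together with the characterisation of source tableaux in Proposition~\ref{prop:source}, one argues that each such swap creates a descent at one of $i\pm1$ that is not already in $\Des(T_0)$. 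Making this case analysis precise, while carefully tracking interactions across adjacent entries as one progresses through the $\preceq$-chain from $T_0$ to $T$, is the technical heart of the proof.
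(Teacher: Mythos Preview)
Your setup for both directions is correct and matches the paper exactly: the ``only if'' direction via Proposition~\ref{prop:simple} and Corollary~\ref{cor:decomposable}, the reduction of the ``if'' direction to indecomposability of ${\bf R}_\alpha^{E_0}$, and the first step of the endomorphism argument (your version of Lemma~\ref{lem:descentofTprime}) are all sound.

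The gap is in your central uniqueness claim: it is \emph{false} that $T_{\sup}$ is the only $T\in E_0$ with $\Des(T)\subseteq\Des(T_{\sup})$. The paper notes this explicitly in Remark~\ref{rmk:indecomposability}, emphasising that the phenomenon occurs even when $\alpha$ is simple, and Example~\ref{ex:TsupThat} exhibits a concrete $\hat{T}\neq T_{\sup}$ in $E_0$ with $\Des(\hat{T})=\Des(T_{\sup})$. Consequently your proposed induction along $\preceq$, arguing that each swap must introduce a new descent outside $\Des(T_{\sup})$, cannot succeed: there are swaps for which the descent created at $i\pm 1$ already lies in $\Des(T_{\sup})$, and these can compose into a nontrivial $\hat{T}$ whose descent set is contained in (indeed equal to) that of $T_{\sup}$.

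What the paper does instead is adopt a technique of K\"onig. Fixing $\hat{T}\neq T_{\sup}$ with $\Des(\hat{T})\subseteq\Des(T_{\sup})$ and $a_{\hat{T}}\neq 0$ of \emph{maximal rank}, one constructs an explicit operator $\pi_x\pi_{x+1}\cdots\pi_{\varepsilon-1}$ (Corollary~\ref{cor:main}) that annihilates $T_{\sup}$ yet acts as a product of honest transpositions on $\hat{T}$. The construction uses the thread description of $T_{\sup}$ (Proposition~\ref{prop:fillingisT0}) and a careful analysis of the smallest entry $\varepsilon$ where $\hat{T}$ and $T_{\sup}$ differ (Lemmas~\ref{lem:fixedsubset}--\ref{lem:dontkillThat}). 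Applying this operator to $f(T_{\sup})$ and using the rank-maximality of $\hat{T}$ to rule out other contributions forces $a_{\hat{T}}=0$, a contradiction. This extra layer---replacing ``$\Des(\hat{T})\not\subseteq\Des(T_{\sup})$'' by ``some Hecke word kills $T_{\sup}$ but not $\hat{T}$''---is precisely the missing idea in your proposal.
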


One direction is immediate from the results in Section~\ref{sec:modules}. By Corollary~\ref{cor:decomposable}, ${\bf R}_\alpha$ is decomposable whenever $\SYRT(\alpha)$ has more than one equivalence class. Therefore by Propositions~\ref{prop:E0nonempty} and \ref{prop:simple}, if $\alpha$ is not simple, then ${\bf R}_\alpha$ is decomposable.

To prove the converse direction, we will show more generally that for any $\alpha\vDash n$, the submodule ${\bf R}_\alpha^{E_0}$ of  ${\bf R}_\alpha$ is indecomposable. Then by Proposition~\ref{prop:simple}, it follows that when $\alpha$ is simple, ${\bf R}_\alpha = {\bf R}_\alpha^{E_0}$ is indecomposable.

\subsection{The source tableau of $E_0$}
We begin by establishing a concrete description of the source tableau of $E_0$, which will be needed later. Let $\alpha\vDash n$ and define the \emph{boundary cells} of $D(\alpha)$ to be the cells in the first column, along with the cells that have no cell strictly above them in the same column or in the column immediately to the left. Order the boundary cells by $(a,b)<(c,d)$ if either $a=c=1$ and $b<d$, or $a<c$. Note that this total order proceeds up the first column, then rightwards. 

To each boundary cell we associate a collection of cells in $D(\alpha)$ called a \emph{thread}. We say a cell is \emph{threaded} if it (already) belongs to a thread. The thread associated to the first boundary cell $(1,1)$ is just the cell $(1,1)$ itself. Assuming threads have been associated to the first $j-1$ boundary cells, the thread associated to the $j$th boundary cell consists of the $j$th boundary cell $\kappa$, the highest unthreaded cell strictly below $\kappa$ in the column immediately to the right of $\kappa$, the highest unthreaded cell strictly below that in the next column to the right, and so on. The thread terminates when there is no unthreaded cell strictly below in the next column to the right. In this way, each thread is a sequence of cells in consecutive columns, proceeding strictly northwest to southeast in $D(\alpha)$, and each cell belongs to at most one thread.

\begin{example}\label{ex:thread}
For $\alpha = (2,5,1,3,3)$, we label the cells of $D(\alpha)$ according to their thread: the $j$th thread consists of all cells with entry $j$. The entry in each boundary cell is bolded. 
\[\tableau{ {\bf 5} & {\bf 6} & {\bf 7} \\ {\bf 4} & 5 & 6 \\ {\bf 3} \\ {\bf 2} & 3 & 5 & 6 & {\bf 8} \\ {\bf 1} & 2 }\] 
\end{example}

\begin{lemma}
For any composition $\alpha$, the threads partition $D(\alpha)$.
\end{lemma}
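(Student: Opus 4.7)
The plan is to establish two claims: (1) each cell of $D(\alpha)$ lies in at most one thread, and (2) each cell of $D(\alpha)$ lies in at least one thread. Claim (1) is immediate from the construction, since each step of extending a thread explicitly selects an \emph{unthreaded} cell. The substance of the lemma is therefore claim (2), which I would prove by induction on the column index $c$, showing that every cell in column $c$ of $D(\alpha)$ is threaded.

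The base case $c = 1$ is immediate, as every cell in column $1$ is a boundary cell and hence starts its own thread. For the inductive step, assume every cell of columns $1, \ldots, c-1$ is threaded. A useful auxiliary property, proved by a parallel induction on column index, is a \emph{monotonicity} statement: among the threads that pass through any fixed column $c'$, those processed earlier occupy strictly smaller rows in column $c'$ than those processed later. In column $1$ this is just the bottom-to-top ordering of boundary cells; in column $c'$ it follows from the inductive monotonicity for column $c'-1$ combined with the greedy ``highest unthreaded cell strictly below'' extension rule, together with a small case analysis handling a possible boundary cell in column $c'$ (which necessarily lies at the topmost row of column $c'$ and, by the ordering on boundary cells, is processed after all extensions from column $c'-1$).

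Granted monotonicity, I would verify that every cell of column $c$ is covered. For $c \geq 2$, there is at most one boundary cell in column $c$, occurring at the topmost row of column $c$ exactly when the topmost row of column $c-1$ lies at or below it; such a cell is threaded as the start of its own thread. For each non-boundary cell $(c,r)$, there exists a cell $(c-1,s)$ with $s > r$ strictly above it in column $c-1$ (since any cell of column $c$ strictly above $(c,r)$ also has a corresponding cell in column $c-1$), which is threaded by the outer inductive hypothesis. Processing threads in order through column $c-1$, the monotonicity property and the greedy extension rule force the extensions into column $c$ to fill precisely the non-boundary rows of column $c$. The principal obstacle is verifying that this matching leaves no row uncovered; this is handled by a downward induction on the rows of column $c$, using monotonicity to show that at each stage the next available thread from column $c-1$ reaches the next uncovered row of column $c$.
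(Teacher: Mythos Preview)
Your plan is sound and can be carried out, but it differs from the paper's approach. The paper argues by contradiction in a few lines: assuming some cell is unthreaded, take the leftmost-then-highest such cell $\kappa$, say in column $c$; since $\kappa$ is not a boundary cell there is a cell in column $c-1$ strictly above it, and letting $\kappa'$ be the lowest such, $\kappa'$ is threaded by the extremal choice of $\kappa$. Left-justification of rows forces $\kappa$ to be the highest cell in column $c$ strictly below $\kappa'$, so the thread through $\kappa'$ must have extended to $\kappa$, a contradiction. No auxiliary monotonicity statement is needed.

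Your column-by-column induction, with the parallel monotonicity property, is longer but has the advantage that the monotonicity you establish is essentially the content of the paper's subsequent Lemma~\ref{lem:threadedSW}, so your argument yields both results at once. One small suggestion: the inner verification is more transparently organised as an \emph{upward} induction on the threads through column $c-1$ (in processing order), maintaining the invariant that after the thread at the $j$th-lowest row $s_j$ of column $c-1$ has been processed, the threaded cells in column $c$ are exactly those at rows strictly below $s_j$; the step then reduces to checking whether $s_{j-1}$ lies in column $c$. The ``downward induction on the rows of column $c$'' you propose is workable but requires more bookkeeping to track which cells are already threaded at each stage.
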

\begin{proof}
By definition, each cell belongs to at most one thread. To show each cell belongs to some thread, suppose for a contradiction that some cells were not threaded during the threading process. Consider the leftmost, then highest such cell; call it $\kappa$ and suppose it is in column $c$. Since all boundary cells are threaded by definition, $\kappa$ cannot be a boundary cell. Hence there exists a cell in $D(\alpha)$ strictly above $\kappa$ in column $c-1$. Let $\kappa'$ be the lowest such cell. By assumption, $\kappa'$ is threaded. Since there exist unthreaded cells in column $c$ strictly below $\kappa'$ at the time $\kappa'$ gets threaded (in particular, $\kappa$ is such a cell), the thread containing $\kappa'$ continues to column $c$. But $\kappa$ is the highest cell in column $c$ strictly below $\kappa'$, and by assumption is unthreaded at the time the thread containing $\kappa'$ is being created, so the thread containing $\kappa'$ must continue to $\kappa$, contradicting that $\kappa$ is unthreaded.
\end{proof}

\begin{lemma}\label{lem:threadedSW}
For any composition $\alpha$, there is never an unthreaded cell weakly southwest of a threaded cell in $D(\alpha)$ at any point during the threading process. 
\end{lemma}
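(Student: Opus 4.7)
The plan is to prove, by strong induction on the step $t$ of the threading process, the equivalent invariant that the set $T_t$ of cells threaded after step $t$ is \emph{downward-left closed} in $D(\alpha)$: whenever $(c,r) \in T_t$ and $(c',r') \in D(\alpha)$ with $c' \le c$ and $r' \le r$, then $(c',r') \in T_t$. The base case $T_0 = \emptyset$ is vacuous, so it suffices to check, when a new cell $\kappa = (c,r)$ is added at step $t$, that every cell of $D(\alpha)$ weakly southwest of $\kappa$ other than $\kappa$ lies in $T_{t-1}$.

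Suppose first that $\kappa$ is a boundary cell of its thread. When $c=1$, the cells $(1,1),\dots,(1,r-1)$ are earlier column-$1$ boundary cells by the ordering and so lie in $T_{t-1}$. When $c\ge 2$, the ordering ensures that every boundary cell in columns $1,\dots,c-1$ has been processed along with its entire thread; invoking the fact, established in the previous lemma, that the threads partition $D(\alpha)$, every cell of $D(\alpha)$ in columns $\le c-1$ therefore lies in $T_{t-1}$. Since $\kappa$ is the unique boundary cell in column $c$, any cell $(c,r')\in D(\alpha)$ with $r'<r$ must belong to a thread whose boundary cell is in some column $\le c-1$, and is therefore also in $T_{t-1}$.

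Now suppose $\kappa$ is a non-starting cell of its thread $\theta$; in particular $c\ge 2$, and the preceding cell $(c-1,r^+)$ of $\theta$ with $r^+>r$ was added at step $t-1$. The inductive hypothesis applied to $T_{t-1}$ places every cell of $D(\alpha)$ in columns $\le c-1$ and rows $\le r$ into $T_{t-1}$, since each such cell is weakly southwest of $(c-1,r^+)$; in particular $(c-1,r)\in T_{t-1}$. The delicate point, which is the main obstacle, is to show that every cell $(c,r^*)\in D(\alpha)$ with $r^*<r$ also lies in $T_{t-1}$. For this I trace the thread $\zeta$ that added $(c-1,r)$. Since $(c-1,r)\ne (c-1,r^+)$ we have $\zeta\ne\theta$, and $(c-1,r)$ is not the cell added at step $t-1$, so if $t_1$ denotes the step at which $\zeta$ adds $(c-1,r)$ then $t_1<t-1$. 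Either $\zeta$ terminates at $(c-1,r)$, in which case every $(c,r^*)\in D(\alpha)$ with $r^*<r$ must already be threaded at step $t_1$, else it would have been chosen as the extension of $\zeta$; or else $\zeta$ extends at step $t_1+1$ to the cell $(c,r''_\zeta)$, which is by definition the highest unthreaded cell of column $c$ strictly below row $r$ at step $t_1$. In the latter case, every $(c,r^*)\in D(\alpha)$ with $r''_\zeta<r^*<r$ is already threaded at step $t_1$, and applying the inductive hypothesis to $T_{t_1+1}$ places every $(c,r^*)\in D(\alpha)$ with $r^*\le r''_\zeta$ into $T_{t_1+1}\subseteq T_{t-1}$. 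Combining these two ranges yields that every $(c,r^*)$ with $r^*<r$ lies in $T_{t-1}$, completing the verification that $T_t$ is downward-left closed.
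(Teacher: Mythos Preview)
Your proof is correct. The induction is well-organized and the key recursive step in Case 2---tracing back through the thread $\zeta$ that added $(c-1,r)$ and splitting into the two subcases according to whether $\zeta$ terminates there or continues into column $c$---handles exactly the obstacle you identify. The appeals to the strong inductive hypothesis at $T_{t-1}$ and $T_{t_1+1}$ are legitimate since you verified $t_1+1\le t-1<t$, and your use of the preceding partition lemma in the boundary-cell case is not circular in the paper's ordering.

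Your route differs genuinely from the paper's. Rather than inducting on steps, the paper proves a single auxiliary claim: every thread, in every column it visits, picks the \emph{lowest} unthreaded cell (not merely the highest one strictly below the previous cell, as the definition says). This is argued by taking the first instance where a non-lowest cell is chosen and deriving a contradiction from the cell just below-left. From ``always lowest'' the downward-left closure follows, since columns are then filled bottom-up and threads descend strictly as they move right. The paper's approach is shorter and yields the ``lowest unthreaded'' description as a byproduct; your approach is more self-contained, proving the stated invariant directly without an auxiliary reformulation, at the cost of the somewhat delicate trace-back in Case 2. Both are sound.
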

\begin{proof}
We claim a thread always takes the \emph{lowest} unthreaded cell in each column, which immediately implies the statement. Suppose when constructing the $j$th thread, we did not take the lowest unthreaded cell in some column $c$, and that this was the first instance in the threading process that a non-lowest cell was taken. Let $\kappa$ denote the lowest unthreaded cell in column $c$ at this instance in the process. We may assume $c>1$, since all cells in the first column are boundary cells and the threading process forces each of them to be threaded in order from bottom to top. Consider the lowest cell $\kappa'$ in column $c-1$ that is strictly above $\kappa$. Note that the $j$th thread has used a cell in column $c$ strictly above $\kappa$, so it must have used a cell in column $c-1$ strictly above $\kappa$ as well. By our assumption, there is currently no threaded cell above an unthreaded cell in column $c-1$, so in particular, the cell $\kappa'$ must already be threaded. But if $\kappa'$ is the lowest cell in column $c-1$ that is strictly above $\kappa$, then, since rows of $D(\alpha)$ are left-justified, $\kappa$ is the highest cell in column $c$ that is strictly below $\kappa'$. Hence, since  $\kappa$ is currently unthreaded, $\kappa'$ and $\kappa$ must belong to the same thread. If the thread of $\kappa'$ is not the $j$th thread, 
this contradicts $\kappa$ being currently unthreaded. If the thread of $\kappa'$ is the $j$th thread, this contradicts our assumption that the $j$th thread chooses a different cell to $\kappa$ in column $c$.
\end{proof}

Suppose $D(\alpha)$ has threads $L_1, \ldots , L_m$, in order. Define a standard filling $T_{\sup}$ of $D_\alpha$ by filling each thread $L_k$ with $|L_1|+\ldots + |L_k-1| + 1, |L_1|+\ldots + |L_k-1| + 2, \ldots , |L_1|+\ldots + |L_k-1| + |L_k|$ consecutively from right to left. 

\begin{example}\label{ex:superstandard}
Let $\alpha = (2,5,1,3,3)$, as in Example~\ref{ex:thread}. Then
\[T_{\sup} = \tableau{ 9 & 12 & 13 \\ 6 & 8 & 11 \\ 5 \\ 3 & 4 & 7 & 10 & 14 \\ 1 & 2 }.\] 
\end{example}

\begin{proposition}\label{prop:fillingisT0}
For any composition $\alpha$, $T_{\sup}$ is the source tableau of $E_0$.
\end{proposition}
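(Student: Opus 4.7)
The goal is to prove $T_{\sup}$ is the source tableau of $E_0$. Since Corollary~\ref{cor:sourceunique} guarantees the source tableau of $E_0$ is unique, it suffices to check that $T_{\sup}$ is a source tableau lying in $E_0$. I will break this into three tasks: (i)~$T_{\sup} \in \SYRT(\alpha)$; (ii)~entries increase bottom-to-top in every column (so $T_{\sup} \in E_0$); and (iii)~$T_{\sup}$ satisfies the criterion of Proposition~\ref{prop:source}.

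The main tool will be a \emph{monotonicity principle} derived from Lemma~\ref{lem:threadedSW}: whenever $(c_1,r_1), (c_2,r_2) \in D(\alpha)$ are distinct cells with $(c_1,r_1)$ weakly southwest of $(c_2,r_2)$, the lemma forces $(c_1,r_1)$ to be threaded strictly before $(c_2,r_2)$, so $T_{\sup}(c_1,r_1) < T_{\sup}(c_2,r_2)$ (since entries in later threads are larger). Applied to same-row cells this gives (R1); applied to same-column cells it gives (R2) and (ii). For (R3), it suffices to show $T_{\sup}(c,r) > T_{\sup}(c+1,r')$ whenever $(c,r), (c+1,r') \in D(\alpha)$ with $r' < r$, since then column-monotonicity either makes the hypothesis of (R3) fail or ensures its conclusion is violated.

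I anticipate the key obstacle to be establishing this final inequality. Writing $\mu,\tau$ for the threads of $(c,r)$ and $(c+1,r')$, the cases $\mu > \tau$ and $\mu = \tau$ are immediate from thread numbering and from the right-to-left filling within a single thread, respectively. The case $\mu < \tau$ requires care. Because $(c,r) \in D(\alpha)$ with $r > r'$, the cell $(c+1,r')$ cannot be a boundary cell, so $\tau$ has a predecessor $(c,r^*) \in \tau$ with $r^* > r'$ and $r^* \ne r$. If $r^* < r$, then the lowest-unthreaded rule applied when $\tau$ takes $(c,r^*)$ forces $(c,r)$ to be unthreaded at that moment (else Lemma~\ref{lem:threadedSW} is violated), yielding $\mu > \tau$, a contradiction. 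If $r^* > r$ and $(c+1,r) \in D(\alpha)$, then $(c+1,r)$ must have been threaded before $\tau$ reaches column $c+1$ (lest $\tau$ choose it over $(c+1,r')$), contradicting column-monotonicity in column $c+1$. If $r^* > r$ and $(c+1,r) \notin D(\alpha)$, then after threading $(c,r)$, the thread $\mu$ cannot terminate (since $(c+1,r')$ is an available candidate) and cannot pick $(c+1,r')$ (else $\mu = \tau$); thus it picks some $(c+1,r^\sharp)$ with $r' < r^\sharp < r$, again violating column-monotonicity.

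To verify (iii), consider any pair $i,i+1$ with $i+1$ weakly left of $i$ in $T_{\sup}$. If $i$ and $i+1$ lie in the same thread, the right-to-left filling places $i+1$ at the cell of the thread immediately preceding that of $i$: one column left and in a strictly higher row, exactly meeting Proposition~\ref{prop:source}'s condition. Otherwise $i$ is the maximum of some thread $\tau$ and $i+1$ the minimum of $\tau+1$; inspecting the boundary-cell ordering (column 1 cells first by row, then the remaining boundary cells by column) shows that this yields $i+1$ weakly left of $i$ only when both threads start in column 1 and $\tau+1$ consists of a single cell, in which case $i+1$ sits directly above $i$ in column 1, once more satisfying the condition.
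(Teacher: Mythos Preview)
Your proof is correct and follows essentially the same strategy as the paper's: deduce (R1), (R2), and column-monotonicity from Lemma~\ref{lem:threadedSW}, establish (R3) via the stronger fact that an entry in column $c$ strictly above a cell in column $c+1$ always carries the larger value, and verify the source criterion of Proposition~\ref{prop:source} by the same-thread versus consecutive-thread dichotomy for $i,i+1$. Your (R3) argument is a more granular case analysis than the paper's single direct contradiction (which simply notes that the thread through the upper-left cell must continue into the right column and, by Lemma~\ref{lem:threadedSW} together with the ``highest unthreaded'' rule, must hit exactly the lower-right cell), but the content is the same; one small point worth making explicit in your monotonicity principle is that two distinct cells with one weakly southwest of the other can never lie in the same thread (since threads move strictly southeast), which is what justifies ``earlier threaded $\Rightarrow$ smaller entry.''
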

\begin{proof}
It follows immediately from Lemma~\ref{lem:threadedSW} that $T_{\sup}$ satisfies (R1) and (R2), and that entries increase from bottom to top in all columns. For (R3), it is enough to confirm that for any pair consisting of a cell with entry $x$ in column $c-1$ strictly above a cell with entry $z$ in column $c$, we have $x>z$. Suppose for a contradiction that we had $x<z$. Then the cell with entry $z$ cannot belong to the same thread as the cell with entry $x$; it belongs to a later thread. This means that when constructing the thread that uses the cell with entry $x$, the cell with entry $z$ was unthreaded but not used by this thread. Since the cell with entry $z$ is moreover in the column immediately right of the cell with entry $x$ and strictly below it, the thread using the cell with entry $x$ must have used a cell in that column. By Lemma~\ref{lem:threadedSW}, this cell cannot be strictly above the cell with entry $z$, but by the definition of threading this cell cannot be strictly below the cell with entry $z$. This contradicts $z$ being greater than $x$. Hence $T_{\sup}\in E_0$.

It remains to show $T_{\sup}$ is a source tableau; by Corollary~\ref{cor:sourceunique}, this implies it is the only source tableau of $E_0$. Consider any entry $i$. By definition of $T_{\sup}$ either $i+1$ is in the column immediately left of $i$ and above $i$, or $i$ occupies a boundary cell and $i+1$ is the rightmost entry of the next thread. In the latter case, $i+1$ cannot be left of $i$, since the sequence of boundary cells proceeds weakly leftwards and all cells in a thread are to the right of the boundary cell of that thread. Hence the condition in Proposition~\ref{prop:source} is satisfied.  
\end{proof}

\subsection{Proof of indecomposability of ${\bf R}_\alpha^{E_0}$}

A module $M$ is indecomposable if and only if the only idempotent module endomorphisms of $M$ are $0$ and $1$ \cite{Jacobson}. Suppose $f$ is an idempotent $H_n(0)$-module morphism of ${\bf R}_\alpha^{E_0}$. By Corollary~\ref{cor:cyclic}, ${\bf R}_\alpha^{E_0}$ is generated by $T_{\sup}$, thus $f: {\bf R}_\alpha^{E_0} \rightarrow  {\bf R}_\alpha^{E_0}$ is completely determined by $f(T_{\sup})$. Let 
\[f(T_{\sup}) = \sum_{T\in E_0} a_T T.\] 
We will show that in fact $a_T=0$ for all $T\neq T_{\sup}$; it then follows that $f(T_{\sup}) = a_{T_{\sup}} T_{\sup}$, whence idempotence of $f$ immediately implies that $f$ is either $0$ or $1$. 

The following lemma establishes that $a_{T'}=0$ for a large class of $\SYRT$s $T'\in E_0$.

\begin{lemma}\label{lem:descentofTprime}
Let $T'\in E_0$. If there an $i$ such that $i\in \Des(T')$ but $i\notin \Des(T_{\sup})$, then $a_{T'}=0$.
\end{lemma}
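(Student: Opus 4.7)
The plan is to exploit the $H_n(0)$-morphism property of $f$ together with the explicit trichotomy in the definition of $\pi_i$. Fix an index $i\notin\Des(T_{\sup})$; by definition, $i+1$ is weakly left of $i$ in $T_{\sup}$, so $\pi_i(T_{\sup})=T_{\sup}$, and consequently $\pi_i(f(T_{\sup}))=f(\pi_i(T_{\sup}))=f(T_{\sup})$. Writing $f(T_{\sup})=\sum_{T\in E_0} a_T\,T$ and computing $\pi_i$ termwise, I would partition the tableaux $T\in E_0$ into three sets according to the position of $i+1$ relative to $i$: the set $W$ where $i+1$ is weakly left of $i$ (so $\pi_i$ fixes $T$); the set $B$ where $i+1$ is right-adjacent to $i$ (so $\pi_i(T)=0$); and the set $A$ where $i+1$ is strictly right of $i$ but not adjacent (so $\pi_i(T)=s_i(T)$).

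Equating $\pi_i(f(T_{\sup}))=f(T_{\sup})$ and canceling the common contribution from $W$ on both sides, the identity reduces to
\[
\sum_{T\in A} a_T\, s_i(T) \;=\; \sum_{T\in A} a_T\, T \;+\; \sum_{T\in B} a_T\, T.
\]
The key observation is that the left-hand side is supported on tableaux in which $i+1$ is strictly left of $i$, while the right-hand side is supported on tableaux in which $i+1$ is strictly right of $i$; these two supports are disjoint subsets of $\SYRT(\alpha)$. (Here I use that, by (R1), $i+1$ strictly left of $i$ in an $\SYRT$ forces $i$ and $i+1$ to lie in different rows, ruling out the edge case of $i$ right-adjacent to $i+1$ in the $s_i(T)$ produced on the left.) Since the elements of $\SYRT(\alpha)$ are linearly independent in ${\bf R}_\alpha$, both sides of the displayed identity must vanish separately; from the right-hand side we read off $a_T=0$ for every $T\in A\cup B$, which is precisely the set of $T\in E_0$ with $i\in\Des(T)$.

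Because this conclusion holds for each $i\notin\Des(T_{\sup})$, any $T'\in E_0$ with some descent $i$ that is not a descent of $T_{\sup}$ must satisfy $a_{T'}=0$, proving the lemma. The only place the argument can go wrong is the claim that the two sides of the displayed equation have disjoint supports, which rests on the simple row-strictness observation mentioned above; I do not anticipate any deeper obstacle.
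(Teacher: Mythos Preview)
Your proof is correct and follows essentially the same approach as the paper: both start from $\pi_i(T_{\sup})=T_{\sup}$ to obtain $f(T_{\sup})=\pi_i(f(T_{\sup}))$ and then read off $a_{T'}=0$. The paper's extraction is slightly slicker---it observes that any $S$ with $\pi_i(S)=T'$ would force $\pi_i(T')=\pi_i^2(S)=\pi_i(S)=T'$ by idempotence, contradicting $i\in\Des(T')$---whereas you unpack the trichotomy explicitly and use a disjoint-support argument; the content is the same.
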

\begin{proof}
Since $\pi_i(T_{\sup})=T_{\sup}$, we have 
\[f(T_{\sup}) = f(\pi_i(T_{\sup})) = \pi_i(f(T_{\sup})) = \pi_i( \sum_{T\in \SYRT(\alpha)}a_T T) =  \sum_{T\in \SYRT(\alpha)}a_T \pi_i(T).\]
Therefore the coefficient $a_{T'}$ of $T'$ in $f(T_{\sup})$ is the sum of the coefficients of the $S\in \SYRT(\alpha)$ such that $\pi_i(S)=T'$. But $\pi_i(T')\neq T'$ since $i\in \Des(T')$. Therefore if $\pi_i(S) = T'$, then $T' = \pi_i(S) = \pi_i^2(S) = \pi_i(T') \neq T'$, a contradiction. So there is no such $S$, and thus $a_{T'}=0$. 
\end{proof}

\begin{remark}\label{rmk:indecomposability}
For dual immaculate quasisymmetric functions, quasisymmetric Schur functions and extended Schur functions, the indecomposability classification follows immediately from the appropriate analogue of Lemma~\ref{lem:descentofTprime} (\cite{BBSSZ}, \cite{TvW:1}, \cite{Sea20}). Specifically, for each of these families of functions, the source tableau of the relevant cyclic $0$-Hecke (sub)module has an especially simple form, namely, the filling we use in the proof of Proposition~\ref{prop:E0nonempty} or a reversal of this. This can then be used to show that \emph{every} non-source tableau has some descent that is not a descent of the source tableau. 
On the other hand, for ${\bf R}_\alpha^{E_0}$ the source tableau $T_{\sup}$ is more complicated, and indeed not every $T\in E_0$ has a descent that is not a descent of the source tableau, even if $\alpha$ is simple. Therefore establishing indecomposability of ${\bf R}_\alpha^{E_0}$ requires further analysis.
\end{remark}

From now on, fix $\hat{T}\in E_0$ such that $\hat{T}\neq T_{\sup}$ and $\Des({\hat{T}})\subseteq \Des(T_{\sup})$. Lemma~\ref{lem:descentofTprime} reduces the problem to showing that $a_{\hat{T}} = 0$.  
To do this, we make use of a technique of \cite{Koenig}, which requires us to establish the existence of a sequence of operators that sends $T_{\sup}$ to $0$ but does not send $\hat{T}$ to $0$, such that each operator in the sequence exchanges entries of $\hat{T}$. We exhibit such a sequence in Corollary~\ref{cor:main}. 

Fix a sequence of operators $\pi_{i_1} \ldots \pi_{i_p}$ such that $\pi_{i_1} \ldots \pi_{i_p}(T_{\sup}) = s_{i_1}\ldots s_{i_p}(T_{\sup})= \hat{T}$. Such a sequence exists since $T_{\sup}$ is the source tableau of $E_0$ (Proposition~\ref{prop:fillingisT0}). Let $\varepsilon$ denote the smallest entry that occupies a different cell in $\hat{T}$ to the cell it occupies in $T_{\sup}$. For the following lemmas leading to Corollary~\ref{cor:main}, we use the following running example as an illustration.

\begin{example}\label{ex:TsupThat}
Let $\alpha = (5,3,4,1,2)\vDash 15$. Below are $T_{\sup}$  and a $\hat{T}\in E_0$ with $\Des(\hat{T}) = \{1,3,6,11, 13\} = \Des(T_{\sup})$.
\[T_{\sup} = \tableau{ 11 & 13 \\ 10  \\ 6 & 9 & 12 & 15  \\ 3 & 5 & 8  \\ 1 & 2 & 4 & 7 & 14 \\ } \qquad \qquad \hat{T} = \tableau{ 11 & 15 \\ 10 \\ 6 & 9 & 13 & 14 \\ 3 & 5 & 8 \\ 1 & 2 & 4 & 7 & 12 \\ }\]
Here we have $\pi_{14}\pi_{12}\pi_{13}(T_{\sup})= s_{14}s_{12}s_{13}(T_{\sup}) = \hat{T}$ and $\varepsilon = 12$. Notice this sequence of operators does not contain any $\pi_i$ such that $i<\varepsilon$, and that $\varepsilon$ occupies a cell in $\hat{T}$ strictly right of the cell it occupies in $T_{\sup}$, agreeing with Lemmas~\ref{lem:fixedsubset} and \ref{lem:epsilonright} below.
\end{example}
 
\begin{lemma}\label{lem:fixedsubset}
For any $i<\varepsilon$, $\pi_i$ does not appear in any sequence $\pi_{i_1} \ldots \pi_{i_p}$ of operators such that $\pi_{i_1} \ldots \pi_{i_p}(T_{\sup}) =s_{i_1} \ldots s_{i_p}(T_{\sup}) = \hat{T}$. 
\end{lemma}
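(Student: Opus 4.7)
The plan is to argue by contradiction. Suppose that for some $i < \varepsilon$ the operator $\pi_i$ appears in the sequence $\pi_{i_1}\cdots\pi_{i_p}$, and let $i_0$ be the minimum such $i$. Observe that no $\pi_j$ with $j < i_0$ can appear in the sequence either, since any such $j$ would satisfy $j < i_0 < \varepsilon$, contradicting the minimality of $i_0$. In particular $\pi_{i_0-1}$ does not appear in the sequence.

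Next I would track the cell occupied by the label $i_0$ as the operators are applied one after the other. The only swaps that can move the label $i_0$ are $s_{i_0-1}$ (which exchanges labels $i_0-1$ and $i_0$) and $s_{i_0}$ (which exchanges labels $i_0$ and $i_0+1$); all other swaps fix the cell of label $i_0$. Since $\pi_{i_0-1}$ is absent from the sequence, the only operator in the sequence that can move label $i_0$ is $\pi_{i_0}$. Moreover, the hypothesis $\pi_{i_1}\cdots\pi_{i_p}(T_{\sup}) = s_{i_1}\cdots s_{i_p}(T_{\sup})$ forces each operator $\pi_{i_k}$ to act as the genuine swap $s_{i_k}$ on the current intermediate tableau: if at some step the $(i_k{+}1)$ were weakly left of or right-adjacent to $i_k$, then $\pi_{i_k}$ would act as the identity or as zero, and the resulting tableau would differ from $s_{i_k}$ applied to the same tableau. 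Hence at every application of $\pi_{i_0}$, label $i_0+1$ is strictly right of label $i_0$, so the swap sends the label $i_0$ to a strictly later column.

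Since by assumption $\pi_{i_0}$ occurs at least once in the sequence, the column of label $i_0$ strictly increases during the sequence and, by the preceding paragraph, never decreases. Thus in $\hat{T} = s_{i_1}\cdots s_{i_p}(T_{\sup})$ the label $i_0$ sits in a column strictly to the right of where it sits in $T_{\sup}$. But $i_0 < \varepsilon$, so by the definition of $\varepsilon$ the label $i_0$ must occupy the same cell in $\hat{T}$ as in $T_{\sup}$. This contradiction establishes the lemma.

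The only point requiring care is the verification that, under the hypothesis equating the $\pi$-product and the $s$-product, every operator in the sequence is a nontrivial swap; once this is in hand, the monotone rightward motion of the label $i_0$ and the definition of $\varepsilon$ together force the contradiction, and no further casework is needed.
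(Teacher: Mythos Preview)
Your proof is correct and follows essentially the same approach as the paper's. The paper argues by induction on $i$ while you argue by taking a minimal counterexample $i_0$, but the core idea is identical: the minimality ensures $\pi_{i_0-1}$ is absent from the sequence, so the label $i_0$ can only be moved by $\pi_{i_0}$, and each such application (acting as $s_{i_0}$) pushes $i_0$ into a strictly later column, contradicting the definition of $\varepsilon$.
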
 
\begin{proof}
We proceed by induction on $i$. Since $1$ is always in the lowest cell in the first column of any $\SYRT$, $\pi_1$ cannot act as $s_1$ on any $\SYRT$, thus $\pi_1$ does not appear in the sequence. Now let $1<i<\varepsilon$ and suppose $\pi_1, \ldots , \pi_{i-1}$ do not appear. By definition, application of $\pi_i$ moves $i$ strictly rightwards, and the only way to move $i$ strictly leftwards is by applying $\pi_{i-1}$. Since by assumption $\pi_{i-1}$ is never applied, if $\pi_i$ is applied then in $\hat{T}$ the entry $i$ occupies a position strictly right of the position $i$ occupies in $T_{\sup}$, contradicting the minimality of $\varepsilon$. 
\end{proof}

\begin{lemma}\label{lem:epsilonright}
The cell of $D(\alpha)$ occupied by $\varepsilon$ in $\hat{T}$ is strictly right of the cell of $D(\alpha)$ occupied by $\varepsilon$ in $T_{\sup}$.
\end{lemma}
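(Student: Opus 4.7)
The plan is to use Lemma~\ref{lem:fixedsubset} together with the observation that, within the fixed sequence $s_{i_1}\cdots s_{i_p}(T_{\sup})=\hat{T}$, the only operator that can affect the cell occupied by $\varepsilon$ is $\pi_\varepsilon$ itself, and this operator only ever moves $\varepsilon$ strictly to the right.

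First, I would invoke Lemma~\ref{lem:fixedsubset} to conclude that every index $i_k$ in the chosen sequence satisfies $i_k \geq \varepsilon$. I would then observe that for $i_k > \varepsilon$, the operator $\pi_{i_k}$ either fixes the tableau, sends it to $0$, or swaps the entries $i_k$ and $i_k+1$; in none of these cases does the cell occupied by $\varepsilon$ change. Thus the only operator in the sequence that can ever move $\varepsilon$ is $\pi_\varepsilon$. Moreover, since the sequence takes $T_{\sup}$ to $\hat{T}$ via genuine swaps (no $0$'s arise), every application of $\pi_\varepsilon$ in the sequence must act as $s_\varepsilon$, which by definition only applies when $\varepsilon+1$ is strictly right of $\varepsilon$ in the current tableau, in which case $\varepsilon$ is moved to the cell previously occupied by $\varepsilon+1$, i.e., strictly to the right.

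Next, I would argue that $\pi_\varepsilon$ must appear in the sequence at least once. By the definition of $\varepsilon$, the cell occupied by $\varepsilon$ in $\hat{T}$ differs from the cell it occupies in $T_{\sup}$; since no other operator in the sequence affects the cell of $\varepsilon$, $\pi_\varepsilon$ must be applied at least once. Because each application of $\pi_\varepsilon$ moves $\varepsilon$ strictly to the right, and no operator in the sequence ever moves $\varepsilon$ leftward, I can then conclude that the cell occupied by $\varepsilon$ in $\hat{T}$ is strictly right of the cell it occupies in $T_{\sup}$.

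There is no real obstacle here; the statement is essentially a corollary of Lemma~\ref{lem:fixedsubset}. The only point that needs any care is the remark that the other operators $\pi_{i_k}$ with $i_k > \varepsilon$ truly leave the cell of $\varepsilon$ fixed (which is immediate, since those operators only ever swap entries strictly greater than $\varepsilon$), and the observation that $\pi_\varepsilon$ cannot act as the identity if $\varepsilon$ is to change cells at all.
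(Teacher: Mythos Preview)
Your proposal is correct and follows essentially the same approach as the paper: invoke Lemma~\ref{lem:fixedsubset} to rule out $\pi_{\varepsilon-1}$ (indeed all $\pi_i$ with $i<\varepsilon$), observe that only $\pi_\varepsilon$ can move the entry $\varepsilon$ and always moves it strictly rightward, and conclude from the fact that $\varepsilon$ changes position that $\pi_\varepsilon$ must be applied at least once. Your write-up is slightly more explicit than the paper's in noting that operators $\pi_{i_k}$ with $i_k>\varepsilon$ leave the cell of $\varepsilon$ fixed, but the argument is the same.
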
 
\begin{proof}
By Lemma~\ref{lem:fixedsubset}, $\pi_{\varepsilon-1}$ never occurs in $\pi_{i_1} \ldots \pi_{i_p}$. However, since $\varepsilon$ occupies a different cell in $\hat{T}$ than it does in $T_{\sup}$, $\pi_\varepsilon$ must occur. Application of $\pi_{\varepsilon}$ moves $\varepsilon$ strictly rightwards, and $\varepsilon$ cannot ever move strictly leftwards because the sequence does not contain  $\pi_{\varepsilon-1}$.
\end{proof}

In Example~\ref{ex:TsupThat}, notice that in $T_{\sup}$ the entry $\varepsilon=12$ occupies the rightmost cell in its thread (that is, the cells with entries $12$ and $13$), and that $\varepsilon-1=11$ is a descent, agreeing with the statements of Lemmas~\ref{lem:rightmostinthread} and \ref{lem:epsilonminusonedescent} below.

\begin{lemma}\label{lem:rightmostinthread}
The cell containing $\varepsilon$ in $T_{\sup}$ is the rightmost cell in its thread.
\end{lemma}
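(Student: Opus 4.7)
The plan is to argue by contradiction. Suppose $\varepsilon$ occupies a cell $(c,r)$ in $T_{\sup}$ that is not the rightmost of its thread. Since entries of each thread are filled consecutively from right to left and threads proceed strictly northwest to southeast, the cell immediately to the right of $(c,r)$ in this thread contains $\varepsilon-1$ and sits at $(c+1,r')$ for some $r'<r$.

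The crucial first step is to pin down the column of $\varepsilon$ in $\hat T$. In $T_{\sup}$, $\varepsilon$ lies strictly left of $\varepsilon-1$, so $\varepsilon-1\notin \Des(T_{\sup})$; by the standing hypothesis $\Des(\hat T)\subseteq \Des(T_{\sup})$, also $\varepsilon-1\notin \Des(\hat T)$, meaning $\varepsilon$ lies weakly left of $\varepsilon-1$ in $\hat T$. Combined with Lemma~\ref{lem:epsilonright}, which forces $\varepsilon$ strictly right of column $c$ in $\hat T$, this places $\varepsilon$ in column $c+1$ in $\hat T$. Since $\hat T\in E_0$ forces column $c+1$ to increase upward, and no integer lies strictly between $\varepsilon-1$ and $\varepsilon$, the row $r^*$ of $\varepsilon$ in $\hat T$ must be the smallest row strictly above $r'$ with a cell in column $c+1$ of $D(\alpha)$.

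From here the plan is to split into three cases according as $r^*=r$, $r^*<r$, or $r^*>r$, and derive a contradiction in each, using (R1)/(R3) together with Lemma~\ref{lem:fixedsubset} (entries at most $\varepsilon-1$ are pinned to their $T_{\sup}$ positions in $\hat T$). When $r^*=r$, the cell $(c,r)$ in $\hat T$ must contain an entry $\ge\varepsilon+1$ (no smaller entry could land there, and $\varepsilon$ has moved), directly violating (R1) against $\varepsilon$ at $(c+1,r)$. When $r^*<r$, the entry $y$ at $(c,r^*)$ is $\le\varepsilon-1$ by upward increase in column $c$ of $T_{\sup}$, hence fixed; applying (R3) to the triple with $(c,r^*)$ top-left and $(c+1,r')$ bottom-right forces $y=\varepsilon-1$, but $\varepsilon-1$ already sits at $(c+1,r')$. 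When $r^*>r$, the cell $(c+1,r)$ cannot exist in $D(\alpha)$ (otherwise $r$ would be a candidate smaller than $r^*$), and the entry $u$ at $(c,r^*)$ in $T_{\sup}$ is $\ge\varepsilon+1$ by upward increase in column $c$; but (R1) applied to row $r^*$ of $\hat T$ forces the entry at $(c,r^*)$ to be $\le\varepsilon-1$, hence fixed and equal to $u$, contradicting $u\ge\varepsilon+1$.

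The main subtlety is the preliminary step of locating $\varepsilon$ in column $c+1$ of $\hat T$ via the descent-subset hypothesis, since none of the earlier lemmas directly constrains the column $\varepsilon$ moves to. Once this is in hand, each of the three cases reduces to a short combinatorial check using the $\SYRT$ axioms and the fixed-position property of entries below $\varepsilon$.
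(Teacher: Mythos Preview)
Your proof is correct, and it takes a genuinely different route from the paper's argument.

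Both proofs start from the same contradiction setup: if $\varepsilon$ is not rightmost in its thread, then $\varepsilon-1$ sits at $(c+1,r')$ in the column immediately right of $\varepsilon$, so $\varepsilon-1\notin\Des(T_{\sup})$. From here the paths diverge. The paper works \emph{dynamically}: it tracks the operator sequence $\pi_{i_1}\cdots\pi_{i_p}$ and analyses the intermediate tableau $T'$ at the moment $\pi_\varepsilon$ is first applied, arguing that $\varepsilon+1$ cannot lie in column $c+1$ of $T'$ (using (R3) and the increasing-column condition), so $\varepsilon$ must jump at least two columns rightward; hence $\varepsilon-1\in\Des(\hat T)$, contradicting the descent hypothesis. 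You instead work \emph{statically}, using only the two endpoint tableaux: the descent hypothesis together with Lemma~\ref{lem:epsilonright} pins $\varepsilon$ to column $c+1$ in $\hat T$, and then the $\SYRT$ axioms on $\hat T$ combined with the fixed-position property of entries below $\varepsilon$ yield a contradiction via a row case split.

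What each buys: your argument avoids any reasoning about intermediate tableaux in the operator sequence, which makes it somewhat cleaner and more self-contained. The paper's argument, on the other hand, proves the marginally stronger fact that $\varepsilon$ moves at least two columns in a single application of $\pi_\varepsilon$ (though this is not used later). One minor remark: your case $r^*<r$ is in fact vacuous, since the threading construction (via Lemma~\ref{lem:threadedSW}) forces there to be no cell of $D(\alpha)$ in column $c+1$ at any row strictly between $r'$ and $r$; your treatment of it is nonetheless valid.
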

\begin{proof}
Suppose the cell containing $\varepsilon$ in $T_{\sup}$ was not the rightmost in its thread. Then $\varepsilon-1$ is in the column immediately right of the column containing $\varepsilon$; in particular, $\varepsilon-1$ is not a descent in $T_{\sup}$. We will show that $\varepsilon-1$ must be a descent in $\hat{T}$, contradicting the assumption $\Des(\hat{T})\subseteq \Des(T_{\sup})$. 

By Lemma~\ref{lem:fixedsubset}, $\varepsilon-1$ occupies the same cell in $\hat{T}$ as it does in $T_{\sup}$, and by Lemma~\ref{lem:epsilonright} $\varepsilon$ occupies a cell in $\hat{T}$ that is strictly right of the cell it occupies in $T_{\sup}$. Therefore, it suffices to show that $\varepsilon$ is moved at least two columns rightwards when $\pi_\varepsilon$ is first applied, since this will ensure $\varepsilon$ is strictly right of $\varepsilon-1$ in $\hat{T}$. (Any subsequent applications of $\pi_\varepsilon$ only move $\varepsilon$ further rightwards.)  When $\pi_\varepsilon$ is applied, $\varepsilon$ swaps with $\varepsilon+1$, so we need to show that $\varepsilon+1$ cannot be in the column immediately right of the column that $\varepsilon$ occupies in $T_{\sup}$ when $\pi_\varepsilon$ is first applied, since then $\varepsilon$ would only move one column rightwards.

In $T_{\sup}$, there is no cell in the column of $\varepsilon-1$ (i.e., the column immediately right of the column of $\varepsilon$) that is above $\varepsilon-1$ and strictly below $\varepsilon$. (Otherwise since columns increase from bottom to top, this cell would have an entry greater than $\varepsilon$, and this cell and the cell containing $\varepsilon$ form a triple which would violate (R3)). 
Moreover any cell below $\varepsilon-1$ in the column of $\varepsilon-1$ has an entry smaller than $\varepsilon-1$ due to the increasing column condition, and these entries never change during the process due to Lemma~\ref{lem:fixedsubset}. 

Therefore when $\pi_\varepsilon$ is first applied, if $\varepsilon+1$ is in the column of $\varepsilon-1$, it must be weakly above $\varepsilon$. Let $T'\in E_0$ denote the $\SYRT$ to which $\pi_\varepsilon$ is first applied in the process. It is impossible for $\varepsilon+1$ to be strictly above $\varepsilon$ in $T'$, because then the entry in the cell in the column containing $\varepsilon$ and the row containing $\varepsilon+1$ would have to be both strictly greater than $\varepsilon$ and strictly smaller that $\varepsilon+1$ due to (R1) and the increasing column condition. Therefore, $\varepsilon+1$ can only be in the column immediately right of $\varepsilon$ in $T'$ if it is right-adjacent to $\varepsilon$, but then $\pi_\varepsilon(T')=0$, contradicting that $\pi_{i_1} \ldots \pi_{i_p}(T_{\sup}) = \hat{T}$. 
\end{proof}

\begin{lemma}\label{lem:epsilonminusonedescent}
The entry $\varepsilon-1$ is a descent of $T_{\sup}$.
\end{lemma}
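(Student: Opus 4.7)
My plan is to proceed by contradiction: suppose $\varepsilon - 1 \notin \Des(T_{\sup})$, so that $\varepsilon$ lies weakly left of $\varepsilon - 1$ in $T_{\sup}$, and derive a contradiction using the thread structure of $T_{\sup}$ together with the hypothesis $\Des(\hat{T}) \subseteq \Des(T_{\sup})$.

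First I will locate $\varepsilon$ and $\varepsilon - 1$ in $T_{\sup}$. By Lemma~\ref{lem:rightmostinthread}, $\varepsilon$ sits at the rightmost cell of its thread $L_k$. Since each thread is filled with consecutive values from right to left in $T_{\sup}$, $\varepsilon$ is the smallest entry of $L_k$, which forces $\varepsilon - 1$ to be the largest entry of the preceding thread $L_{k-1}$ and hence to occupy the leftmost cell of $L_{k-1}$, namely its boundary cell $\kappa_{k-1}$. Here $k \geq 2$, since entry $1$ occupies $(1,1)$ in every tableau in $E_0$, forcing $\varepsilon \geq 2$.

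Next, I will use the assumption that $\varepsilon$ lies weakly left of $\varepsilon - 1$ to narrow down the configuration. Threads proceed strictly northwest to southeast, so the rightmost cell of $L_k$ lies in a column weakly right of $\kappa_k$. Since at most one boundary cell occurs in any column beyond the first, and the boundary cell ordering places $\kappa_{k-1}$ before $\kappa_k$, the column of $\kappa_{k-1}$ is weakly left of the column of $\kappa_k$. Combining these with the hypothesis that the rightmost cell of $L_k$ is weakly left of $\kappa_{k-1}$ collapses all three columns to a single common value, which must be column $1$, and forces $L_k = \{\kappa_k\}$ to be a singleton thread. Consequently, $\kappa_{k-1} = (1, r_{k-1})$ and $\kappa_k = (1, r_k)$ with $r_{k-1} < r_k$, so $\varepsilon - 1$ and $\varepsilon$ sit at $(1, r_{k-1})$ and $(1, r_k)$ respectively in $T_{\sup}$.

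Finally, I will bring $\hat{T}$ into the picture to reach the contradiction. By Lemma~\ref{lem:fixedsubset}, $\varepsilon - 1$ still sits at $(1, r_{k-1})$ in $\hat{T}$; by Lemma~\ref{lem:epsilonright}, $\varepsilon$ has moved strictly right of $(1, r_k)$ in $\hat{T}$, hence to some column $\geq 2$. Therefore in $\hat{T}$, $\varepsilon$ lies strictly right of $\varepsilon - 1$, giving $\varepsilon - 1 \in \Des(\hat{T})$. Together with $\Des(\hat{T}) \subseteq \Des(T_{\sup})$, this yields $\varepsilon - 1 \in \Des(T_{\sup})$, contradicting the initial assumption. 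The main obstacle will be the middle step: the restrictive configuration (two consecutive column-$1$ boundary cells with $L_k$ a singleton directly above $L_{k-1}$'s boundary) genuinely does occur in some $T_{\sup}$, so the lemma cannot be proved by examining $T_{\sup}$ in isolation; the resolution is to propagate the forced displacement of $\varepsilon$ out of column $1$ into $\hat{T}$ and observe that this manufactures a descent at $\varepsilon - 1$ in $\hat{T}$ that the containment hypothesis forbids.
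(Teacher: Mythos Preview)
Your proof is correct and follows essentially the same route as the paper's. Both arguments use Lemma~\ref{lem:rightmostinthread} to place $\varepsilon$ at the rightmost cell of its thread and $\varepsilon-1$ at the boundary cell of the preceding thread, then reduce (via the ordering on boundary cells) to the configuration where $\varepsilon$ sits in a singleton thread in column~1 with $\varepsilon-1$ directly below, and finish by invoking Lemmas~\ref{lem:fixedsubset} and~\ref{lem:epsilonright} to produce the forbidden descent $\varepsilon-1\in\Des(\hat{T})$. Your chain-of-inequalities collapse is slightly more systematic than the paper's case split on $|L_k|\ge 2$, and you correctly avoid asserting that $L_{k-1}$ must itself be a singleton (which the paper states but does not actually need).
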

\begin{proof}
By Lemma~\ref{lem:rightmostinthread}, $\varepsilon$ is the rightmost (and thus smallest) entry in its thread. Hence $\varepsilon-1$ is the largest (and leftmost) entry in the preceding thread.  Since later threads start weakly right of earlier threads and cells in a thread proceed strictly rightwards, if the thread containing $\varepsilon$ has at least $2$ cells, then we are done. If it has only one cell, then the only way for $\varepsilon$ to not be strictly right of $\varepsilon-1$ in $T_{\sup}$ is for both $\varepsilon$ and $\varepsilon-1$ to be in threads consisting of a single cell in the first column. But when transforming $\hat{T}$ into $T_{\sup}$, $\pi_{\varepsilon-1}$ is never applied whereas $\pi_{\varepsilon}$ is. Since applying $\pi_{\varepsilon}$ moves $\varepsilon$ strictly rightwards, $\varepsilon-1$ is a descent in $\hat{T}$, contradicting $\Des(\hat{T})\subseteq \Des(T)$.
\end{proof}

Lemma~\ref{lem:epsilonminusonedescent} implies that $\varepsilon$ is not in the first column of $T_{\sup}$, since $\varepsilon$ must be strictly right of $\varepsilon-1$. Therefore, in $T_{\sup}$ there exists a cell left-adjacent to the cell containing $\varepsilon$. Let $x$ denote the entry left-adjacent to $\varepsilon$ in $T_{\sup}$. In Example~\ref{ex:TsupThat}, we have $x=9$. Note also that the entries $9,10,11$ are all strictly left of $\varepsilon = 12$ in both $T_{\sup}$ and $\hat{T}$, agreeing with the statements of Lemma~\ref{lem:killT0} and Lemma~\ref{lem:dontkillThat} below.

In $T_{\sup}$, define a \emph{run} of entries to the entries in a single thread of cells, thought of as an increasing sequence. In this way, we define the $j$th run of $T_{\sup}$ to be the entries in the $j$th thread of $D(\alpha)$. In Example~\ref{ex:TsupThat}, the $4$th run of $T$ consists of the entries $7,8,9,10$. 

\begin{lemma}\label{lem:killT0}
The entries $x, x+1, \ldots , \varepsilon-2, \varepsilon-1$ all reside strictly left of $\varepsilon$ in $T_{\sup}$.
\end{lemma}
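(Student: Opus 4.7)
The plan is to prove the contrapositive: no entry $y$ with $x\le y\le \varepsilon-1$ lies in a cell of column $\ge c$, where $c$ denotes the column of $\varepsilon$ in $T_{\sup}$ and $r$ denotes its row. Since $x$ sits at $(c-1,r)$, it lies in column $c-1<c$ automatically, so I would handle the cases of column $c$ and columns strictly right of $c$ separately.

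For column $c$, I would apply (R3) to the triple consisting of the cells $(c-1,r)$, $(c,r)$, and $(c,r'')$ for any $(c,r'')\in D(\alpha)$ with $r''<r$. Writing $z=T_{\sup}(c,r'')$, since $T_{\sup}\in E_0$ has entries increasing up each column, $z<\varepsilon$. The contrapositive of (R3) then forces $x\ge z$, so $z<x$. Entries in column $c$ strictly above $\varepsilon$ are $>\varepsilon$, so no entry of column $c$ lies in $[x,\varepsilon-1]$.

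For columns strictly right of $c$, I would use the threading structure. Any such cell belongs to some thread $L_l$. If $l\ge k$ (where $L_k$ is the thread containing $\varepsilon$), then all entries of $L_l$ are $\ge \varepsilon$ and we are done. If $l<k$, then by the ordering of boundary cells (up the first column, then rightwards), the boundary cell of $L_l$ lies in a column $\le c_k\le c$, where $c_k$ is the boundary column of $L_k$ (and $c_k\le c$ because $L_k$ occupies consecutive columns ending at column $c$). Since a thread occupies consecutive columns, $L_l$ must therefore pass through column $c$ at some cell $(c,r_l^c)$. By Lemma~\ref{lem:threadedSW}, the cells of column $c$ are threaded in order from bottom to top, so since $l<k$ we have $r_l^c<r$. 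The triple argument above then gives $T_{\sup}(c,r_l^c)<x$, and because entries within $L_l$ decrease moving southeast (each successive cell has entry one less than its northwest predecessor), every entry of $L_l$ in columns $\ge c$ is at most $T_{\sup}(c,r_l^c)<x$.

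Combining these cases closes the argument. The column-$c$ case is a direct application of (R3) and poses no real difficulty; the main delicate step is the columns-$>c$ case, where one must combine the boundary-cell ordering, Lemma~\ref{lem:threadedSW}, and the fact that entries decrease southeast within a thread to conclude that any earlier thread reaching past column $c$ must already have passed through column $c$ below $\varepsilon$ at an entry smaller than $x$.
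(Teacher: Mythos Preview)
Your proof is correct. It uses the same ingredients as the paper's argument—threading structure, the increasing-column property of $E_0$, and condition (R3)—but organises them differently.

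The paper argues thread-by-thread: since $\varepsilon$ is the rightmost (hence smallest) entry in its run, the entries $x,\ldots,\varepsilon-1$ lie in strictly earlier runs. Those in the run of $x$ which exceed $x$ are automatically left of $x$. For any run strictly between the run of $x$ and the run of $\varepsilon$, the paper shows it must terminate at least two columns left of $\varepsilon$: if it reached column $c-1$, its cell there would sit strictly above $x$, and the triple formed with $\varepsilon$ (top-left above, $\varepsilon$ the lower-right) would violate (R3). Your argument instead partitions by column: first you use (R3) at the triple with top-left equal to $x$ itself to force every entry below $\varepsilon$ in column $c$ to be smaller than $x$; then for columns beyond $c$ you trace any earlier thread back through column $c$ (using the boundary-cell ordering and Lemma~\ref{lem:threadedSW}) and invoke the decrease of entries along a thread to bound those entries below $x$ as well.

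Both routes are short; yours has the virtue of a clean column-by-column structure and a single, reusable (R3) application at the cell of $x$. The paper's route yields the slightly sharper intermediate statement that the intermediate runs stop by column $c-2$, though in the end only the stated conclusion (strictly left of column $c$) is used downstream in Lemma~\ref{lem:dontkillThat}.
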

\begin{proof}

Since $\varepsilon$ is the rightmost entry in its run by Lemma~\ref{lem:rightmostinthread}, none of $x, x+1, \ldots , \varepsilon-2, \varepsilon-1$ belong to the run involving $\varepsilon$; they belong to strictly earlier runs. By definition, all entries of the run involving $x$ that are greater than $x$ are strictly left of $x$, and thus strictly left of $\varepsilon$. 

Consider any run using entries from $x, x+1, \ldots , \varepsilon-2, \varepsilon-1$ that is not the run containing $x$. Since such a run is strictly earlier than the run containing $\varepsilon$, it begins weakly to the left of where the run using $\varepsilon$ begins. This implies the starting entry of any run involving $x, x+1, \ldots , \varepsilon-2, \varepsilon-1$ is strictly to the left of $\varepsilon$ (the only way it could be in the same column as $\varepsilon$ is if $\varepsilon$ was the first and only entry of its run and was in the first column, but we know $\varepsilon$ is not in the first column).  We claim this run must in fact end at least two columns to the left of $\varepsilon$, which implies that all its entries are strictly left of $\varepsilon$. If such a run assigned an entry (say $y$) to a cell in the column of $x$ (i.e. immediately left of the column of $\varepsilon$), then since this run is later than the run involving $x$, it places $y$ strictly above $x$ (and thus strictly above $\varepsilon$) by Lemma~\ref{lem:threadedSW}. But then $y$ and $\varepsilon$ form two cells of a triple with $y<\varepsilon$, hence there must exist an entry $z$ right-adjacent to $y$ with $z<\varepsilon$. But this is impossible since entries increase upwards in columns of $T_{\sup}$. 
\end{proof}

\begin{lemma}\label{lem:dontkillThat}
In $\hat{T}$, all entries $x, x+1, \ldots , \varepsilon-2, \varepsilon-1$ reside strictly left of $\varepsilon$, and the entry left-adjacent to $\varepsilon$ is strictly smaller than $x$.
\end{lemma}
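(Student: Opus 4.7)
The plan is to combine three ingredients already in hand: by minimality of $\varepsilon$, every entry strictly less than $\varepsilon$ occupies the same cell of $\hat{T}$ as of $T_{\sup}$; Lemma~\ref{lem:epsilonright} places $\varepsilon$ in $\hat{T}$ in a column strictly greater than its column $c_\varepsilon$ in $T_{\sup}$; and Lemma~\ref{lem:killT0} places each of $x, x+1, \ldots, \varepsilon-1$ in a column strictly less than $c_\varepsilon$ in $T_{\sup}$. Both assertions then reduce to a direct column comparison.

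For the first assertion, the entries $x, x+1, \ldots, \varepsilon-1$ occupy the same cells in $\hat{T}$ as in $T_{\sup}$ by minimality of $\varepsilon$. By Lemma~\ref{lem:killT0} these cells lie in columns strictly less than $c_\varepsilon$, and by Lemma~\ref{lem:epsilonright} the column of $\varepsilon$ in $\hat{T}$ is strictly greater than $c_\varepsilon$. Hence in $\hat{T}$ all of $x, x+1, \ldots, \varepsilon-1$ are strictly left of $\varepsilon$.

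For the second assertion, let $(c, r)$ denote the cell of $\varepsilon$ in $\hat{T}$ and let $y$ denote the entry in the cell $(c-1, r)$ of $\hat{T}$. This cell exists in $D(\alpha)$ by left-justification of rows, together with the bound $c > c_\varepsilon \geq 2$, where $c_\varepsilon \geq 2$ follows from the remark after Lemma~\ref{lem:epsilonminusonedescent}. Condition (R1) forces $y < \varepsilon$, so by minimality of $\varepsilon$ the entry $y$ also occupies $(c-1, r)$ in $T_{\sup}$, placing $y$ in column $c-1 \geq c_\varepsilon$ of $T_{\sup}$. If $y \geq x$, then $y \in \{x, x+1, \ldots, \varepsilon-1\}$, and Lemma~\ref{lem:killT0} would place $y$ in a column strictly less than $c_\varepsilon$ of $T_{\sup}$, contradicting $c-1 \geq c_\varepsilon$. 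Therefore $y < x$.

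The argument is a straightforward column bookkeeping built from the earlier lemmas, so I do not foresee any substantial obstacle. The one point I would be careful to record explicitly is the existence of the left-adjacent cell $(c-1, r)$, which uses both left-justification of rows in $D(\alpha)$ and the strict inequality $c > c_\varepsilon$ from Lemma~\ref{lem:epsilonright}.
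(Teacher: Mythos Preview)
Your proof is correct and follows essentially the same route as the paper's: both use minimality of $\varepsilon$ to freeze the positions of $x,\ldots,\varepsilon-1$, Lemma~\ref{lem:killT0} to bound their columns by $c_\varepsilon-1$, Lemma~\ref{lem:epsilonright} to push the column of $\varepsilon$ in $\hat{T}$ strictly past $c_\varepsilon$, and then (R1) plus column arithmetic to force the left-adjacent entry below $x$. Your version is slightly more explicit about the existence of the left-adjacent cell, but the argument is the same.
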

\begin{proof}
By definition, all entries $1, \ldots , \varepsilon-1$ occupy the same cell in $\hat{T}$ as they do in $T_{\sup}$, and by Lemma~\ref{lem:killT0}, all entries $x, x+1, \ldots \varepsilon-1, \varepsilon$ reside strictly left of $\varepsilon$ in $T_{\sup}$. By the proof of Lemma~\ref{lem:rightmostinthread}, $\varepsilon$ occupies a cell in $\hat{T}$ that is strictly right of the cell it occupies in $T_{\sup}$. So these entries must reside at least two columns to the left of $\varepsilon$ in $\hat{T}$, and thus none of them are left-adjacent to $\varepsilon$ in $\hat{T}$. Since the entry of the cell left-adjacent to $\varepsilon$ in $\hat{T}$ is strictly smaller than $\varepsilon$ (by (R1)), it must also be strictly smaller than $x$. 
\end{proof}

In Example~\ref{ex:TsupThat}, we have $\pi_{9}\pi_{10}\pi_{11}(T_{\sup}) = 0$ while $\pi_{9}\pi_{10}\pi_{11}(\hat{T}) = s_{9}s_{10}s_{11}(\hat{T})\neq 0$, agreeing with the statement of Corollary~\ref{cor:main} below.

\begin{corollary}\label{cor:main}
The operator $\pi_x \pi_{x+1} \ldots \pi_{\varepsilon-2} \pi_{\varepsilon-1}$ satisfies 
\begin{enumerate}
\item $\pi_x \pi_{x+1} \ldots \pi_{\varepsilon-2} \pi_{\varepsilon-1}(T_{\sup}) = 0$; and 
\item $\pi_x \pi_{x+1} \ldots \pi_{\varepsilon-2} \pi_{\varepsilon-1}(\hat{T}) = s_x s_{x+1} \ldots s_{\varepsilon-2} s_{\varepsilon-1}(\hat{T})\neq 0$. 
\end{enumerate}
\end{corollary}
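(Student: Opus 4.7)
The plan is to compute the cascade $\pi_x\pi_{x+1}\cdots\pi_{\varepsilon-1}$ step-by-step on each of $T_{\sup}$ and $\hat{T}$, applying operators right to left. Writing $c_v$ for the cell of entry $v$ in $T_{\sup}$, recall that $c_x$ is left-adjacent to $c_\varepsilon$, and by Lemma~\ref{lem:killT0}, each of $c_x, c_{x+1}, \ldots, c_{\varepsilon-1}$ lies strictly left of $c_\varepsilon$.

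For part (1), I would prove by induction on $j$ that for $1 \le j \le \varepsilon-x-1$, the partial cascade $\pi_{\varepsilon-j}\pi_{\varepsilon-j+1}\cdots\pi_{\varepsilon-1}(T_{\sup})$ yields the $\SYRT$ obtained from $T_{\sup}$ by moving $\varepsilon-j$ into $c_\varepsilon$ and shifting $\varepsilon-j+1, \varepsilon-j+2, \ldots, \varepsilon$ leftward into the cells $c_{\varepsilon-j}, c_{\varepsilon-j+1}, \ldots, c_{\varepsilon-1}$ respectively, leaving all other entries fixed. The inductive step reduces to checking that $\pi_{\varepsilon-j}$ acts as a genuine swap on the previous intermediate tableau: the entries $\varepsilon-j$ and $\varepsilon-j+1$ sit at $c_{\varepsilon-j}$ (untouched) and $c_\varepsilon$ (by the induction) respectively, which are strictly separated by Lemma~\ref{lem:killT0}, and non-adjacent because $\varepsilon-j > x$ forces $c_{\varepsilon-j}\neq c_x$ while $c_x$ is the unique cell left-adjacent to $c_\varepsilon$. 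At the final step $j = \varepsilon-x$, however, $\varepsilon-j = x$ and $c_{\varepsilon-j} = c_x$ is left-adjacent to $c_\varepsilon$; since by induction $x$ sits at $c_x$ and $x+1$ sits at $c_\varepsilon$, the operator $\pi_x$ returns $0$, establishing (1).

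For part (2), the same inductive scheme applies but with no premature termination. Lemma~\ref{lem:fixedsubset} ensures that in $\hat{T}$ the entries $x, x+1, \ldots, \varepsilon-1$ occupy exactly the cells $c_x, c_{x+1}, \ldots, c_{\varepsilon-1}$, while $\varepsilon$ resides in some other cell $c'_\varepsilon$. By Lemma~\ref{lem:dontkillThat}, each $c_{\varepsilon-j}$ (for $1 \le j \le \varepsilon-x$) is strictly left of $c'_\varepsilon$, and none is left-adjacent to $c'_\varepsilon$, because the entry left-adjacent to $\varepsilon$ in $\hat{T}$ (if any) is strictly smaller than $x$. Consequently every $\pi_{\varepsilon-j}$ in the cascade acts as the swap $s_{\varepsilon-j}$, including at the final step $j=\varepsilon-x$, so $\pi_x\pi_{x+1}\cdots\pi_{\varepsilon-1}(\hat{T}) = s_xs_{x+1}\cdots s_{\varepsilon-1}(\hat{T}) \neq 0$.

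This is essentially a position-tracking exercise, and there is no substantial obstacle: the heavy lifting has already been done in Lemmas~\ref{lem:fixedsubset}, \ref{lem:killT0}, and \ref{lem:dontkillThat}, which pin down the precise locations of $x, x+1, \ldots, \varepsilon$ in both $T_{\sup}$ and $\hat{T}$. That the intermediate tableaux remain genuine $\SYRT$s at each nonzero step is automatic from Lemma~\ref{lem:imagecontained}.
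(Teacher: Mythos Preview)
Your proposal is correct and follows essentially the same approach as the paper: both arguments track the cascade step by step, using Lemma~\ref{lem:killT0} to see that each $\pi_j$ with $j>x$ acts as a genuine swap on $T_{\sup}$ (placing $j$ in the cell $c_\varepsilon$) until the final $\pi_x$ kills it, and using Lemma~\ref{lem:dontkillThat} to see that on $\hat{T}$ every $\pi_j$ in the cascade, including $\pi_x$, acts as $s_j$ because the cell left-adjacent to $\varepsilon$ in $\hat{T}$ carries an entry smaller than $x$. Your write-up is slightly more explicit about the induction and about invoking Lemma~\ref{lem:fixedsubset} for part~(2), but the substance is the same.
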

\begin{proof}
For (1), by Lemma~\ref{lem:killT0} all of $x, x+1, \ldots , \varepsilon-2, \varepsilon-1$ are strictly left of $\varepsilon$ in $T_{\sup}$, and $x$ is left-adjacent to $\varepsilon$. Therefore, each operator $\pi_j$ for $x<j$ in the sequence exchanges the entry $j+1$ in the cell that contains $\varepsilon$ in $T_{\sup}$, with the entry $j$. Hence, after $\pi_{x+1}$ is applied, the entry right-adjacent to $x$ is $x+1$, and so applying $\pi_x$ yields $0$.

For (2), by Lemma~\ref{lem:dontkillThat}, all of $x, x+1, \ldots , \varepsilon-2, \varepsilon-1$ are strictly left of $\varepsilon$ in $\hat{T}$, and the entry left-adjacent to $\varepsilon$ is strictly smaller than $x$. Therefore, similarly to (1), each $\pi_j$ in the sequence of operators exchanges the entry $j+1$ in the cell that contains $\varepsilon$ in $\hat{T}$, with the entry $j$. Since the entry left-adjacent to $\varepsilon$ in $\hat{T}$ is strictly smaller than $x$, none of the operators $\pi_j$ in the sequence yield $0$, and in particular all of them act by $s_j$.
\end{proof}

Finally, recall the partial ordering on $\SYRT(\alpha)$ given in Lemma~\ref{lem:partialorder} and restrict this ordering to $E_0$. Define the \emph{rank} of $T\in E_0$ to be $p$ if there is a sequence of operators $\pi_{i_1} \ldots \pi_{i_p}$ satisfying $\pi_{i_1} \ldots \pi_{i_p}(T_{\sup}) = s_{i_1} \ldots s_{i_p}(T_{\sup}) = T$. Such a sequence must exist since $T_{\sup}$ is the source tableau of $E_0$, and it is straightforward to observe that $s_{i_1} \ldots s_{i_p}$ must be a reduced word in the symmetric group $S_n$. It follows that ${\rm rank}(T)$ is well-defined, and moreover that if $\pi_j(T) = s_j(T)$ for some $j$, then ${\rm rank}(\pi_j(T)) = {\rm rank}(T)+1$. We are now ready to prove Theorem~\ref{thm:main}. \\

\noindent
\emph{Proof of Theorem~\ref{thm:main}.} Recall that our goal is to show that if $f(T_{\sup}) = \sum_{T\in E_0} a_T T$, then $a_T=0$ for any $T\neq T_{\sup}$. Suppose for a contradiction that there exists some $T\in E_0$ that is not equal to $T_{\sup}$ and has nonzero coefficient. By Lemma~\ref{lem:descentofTprime}, $a_T=0$ whenever $T$ has a descent that is not a descent of $T_{\sup}$. Therefore, let $\hat{T}\neq T_{\sup}$ be of maximal rank such that $a_{\hat{T}} \neq 0$ and $\Des(\hat{T})\subseteq \Des(T_{\sup})$.

Let $\pi_x \pi_{x+1} \ldots \pi_{\varepsilon-2} \pi_{\varepsilon-1}$ be the sequence of operators from Corollary~\ref{cor:main}, i.e., $\pi_x \pi_{x+1} \ldots \pi_{\varepsilon-2} \pi_{\varepsilon-1}(T_{\sup}) = 0$ and $\pi_x \pi_{x+1} \ldots \pi_{\varepsilon-2} \pi_{\varepsilon-1}(\hat{T}) = s_x s_{x+1} \ldots s_{\varepsilon-2} s_{\varepsilon-1}(\hat{T}) = T'\neq 0$.   
We claim that if $a_T\neq 0$ for some $T\in E_0$ and  $\pi_x \pi_{x+1} \ldots \pi_{\varepsilon-2} \pi_{\varepsilon-1}(T) = T'$, then in fact $T=\hat{T}$. To see this, note that in order to be equal, $\pi_x \pi_{x+1} \ldots \pi_{\varepsilon-2} \pi_{\varepsilon-1}(T)$ and $\hat{T}$ must have the same rank. However, by assumption ${\rm rank}(\hat{T})\ge {\rm rank}(T)$, and each $\pi_i$ acts by $s_i$ when applied in sequence to $\hat{T}$. Since each application of $s_i$ raises rank by one, the only way these two tableaux can have the same rank is for $T$ and $\hat{T}$ to have the same rank and for each $\pi_i$ to also act by $s_i$ when applied in sequence to $T$. But then we have $s_x s_{x+1} \ldots s_{\varepsilon-2} s_{\varepsilon-1}(T) = s_x s_{x+1} \ldots s_{\varepsilon-2} s_{\varepsilon-1}(\hat{T})$, and it follows that $T=\hat{T}$ since each $s_i$ is injective.

Therefore, the coefficient of $T' = \pi_x \pi_{x+1} \ldots \pi_{\varepsilon-2} \pi_{\varepsilon-1}(\hat{T})$ in 
\[\pi_x \pi_{x+1} \ldots \pi_{\varepsilon-2} \pi_{\varepsilon-1}(f(T_{\sup})) = \sum_{S\in E_0} a_S\pi_x \pi_{x+1} \ldots \pi_{\varepsilon-2} \pi_{\varepsilon-1}(S)\]
is precisely $a_{\hat{T}}$. On the other hand, 
\[\pi_x \pi_{x+1} \ldots \pi_{\varepsilon-2} \pi_{\varepsilon-1}(f(T_{\sup})) = f(\pi_x \pi_{x+1} \ldots \pi_{\varepsilon-2} \pi_{\varepsilon-1}(T_{\sup})) = f(0) = 0.\]
Hence $a_{\hat{T}}=0$, contradicting our assumption $a_{\hat{T}}\neq 0$.

Therefore $f(T_{\sup}) = a_{T_{\sup}}T_{\sup}$, and ${\bf R}_\alpha^{E_0}$ is indecomposable. It follows that ${\bf R}_\alpha$ is indecomposable if ${\bf R}_\alpha={\bf R}_\alpha^{E_0}$, which is exactly the case when $\alpha$ is simple. Since we have already observed that ${\bf R}_{\alpha}$ is decomposable when $\alpha$ is not simple, this completes the proof of Theorem~\ref{thm:main}.

%
%

\bibliographystyle{amsalpha} 
\bibliography{RowStrictModule}

\end{document}